\newtheorem{Thm}{Theorem}[section]
\newtheorem{Lemma}[Thm]{Lemma}
\newtheorem{Prop}[Thm]{Proposition}
\newtheorem{Def}[Thm]{Definition}
\newtheorem{Cor}[Thm]{Corollary}
\newtheorem{Rem}[Thm]{Remark}
\newtheorem{Example}[Thm]{Example}
\newtheorem{Quest}{Question}
\newcommand{\dd}{\delta}
\newcommand{\e}{\varepsilon}
\newcommand{\dist}{\textrm{dist}}
\newcommand{\Lc}{\textrm{L}}
\newcommand{\dns}{\textrm{DN-S}}
\newcommand{\dss}{\textrm{DSS}}
\newcommand{\ds}{\textrm{d}}
\newcommand{\dsn}{\textrm{dn}}
\newcommand{\supp}{\textrm{supp\,}}
\newcommand{\To}{\longrightarrow}
\def\PO{\operatorname{PO}}
\newcommand{\U}{\mathcal{U}}
\newcommand{\N}{\mathbb{N}}
\newcommand{\jno}{\textrm \j}
\begin{document}

\title[Disjointly non-singular operators]{Disjointly non-singular operators:\\ 
Extensions and local variations}

\thanks{Supported in part by MICINN (Spain), Grant PID2019-103961GB-C22.\\
2010 Mathematics Subject Classification. Primary: 47B60, 47A55, 46B42.\\
Keywords: disjointly non-singular operator; disjointly strictly singular operator; order continuous Banach lattice, unbounded norm convergence.}

\author[M.\ Gonz\'alez]{Manuel Gonz\'alez}
\address{Departamento de Matem\'aticas, Facultad de Ciencias, Universidad de Canta\-bria, E-39071 Santander, Spain} \email{manuel.gonzalez@unican.es}

\author[A.\ Martin\'on]{Antonio Martin\'on}
\address{Departamento de An\'alisis Matem\'atico, Facultad de Ciencias, Universidad de La Laguna, E-38271 La Laguna (Tenerife), Spain} \email{anmarce@ull.es }

%\date{\today}

%\textcolor{red}{

\begin{abstract}
The disjointly non-singular ($\dns$) operators $T\in\Lc(E,Y)$ from a Banach lattice $E$ to a Banach space $Y$ are those operators which are strictly singular in no closed subspace generated by a disjoint sequence of non-zero vectors. When $E$ is order continuous with a weak unit, $E$ can be represented as a dense ideal in some $L_1(\mu)$ space, and we  show that each of $T\in\dns(E,Y)$ admits an extension $\overline{T}\in \dns(L_1(\mu),\PO)$ from which we derive that both $T$ and $T^{**}$ are tauberian operators and that the operator $T^{co}: E^{**}/E\to  Y^{**}/Y$ induced by $T^{**}$ is an (into) isomorphism.  Also, using a local variation of the notion of $\dns$ operator, we show that the ultrapowers of $T\in\dns(E,Y)$ are also $\dns$ operators. Moreover, when $E$ contains no copies of $c_0$ and admits a weak unit, we  show that $T\in\dns(E,Y)$ implies $T^{**}\in \dns(E^{**},Y^{**})$.
\end{abstract}

\maketitle

\thispagestyle{empty}

\section{Introduction}

In a Banach lattice $E$ we can consider two kinds of closed subspaces: those generated by a disjoint sequence of non-zero vectors, and those that are at a positive distance of every normalized disjoint sequence. The later ones are called \emph{dispersed subspaces} in  \cite{GMM:20}.  
In the study of operators acting on $E$ it is useful to consider their action on these kinds of subspaces (see \cite{FOPT:22}). 
The \emph{disjointly strictly singular} operators ($\dss$ operators, for short) were introduced in \cite{Hernandez-Salinas:89} as those operators $T:E\to Y$ from a Banach lattice $E$ into a Banach space $Y$ such that $T$ is an isomorphism on no closed subspace of $E$ generated by a disjoint sequence of non-zero vectors. These operators have been applied to  the study of the structure of Banach lattices (see \cite{FLT:16} and references therein). More recently, the disjointly non-singular operators ($\dns$ operators, for short) where introduced in \cite{GMM:20} as those operators $T:E\to Y$ from a Banach lattice $E$ to a Banach space $Y$ that are strictly singular in no closed subspace of $E$ generated by a disjoint sequence of non-zero vectors. 
The $\dns$ operators have also been  studied in \cite{Bilokopytov:21} and \cite{GM:22}. 
Note that the kernel of a $\dns$  operator is a dispersed subspace. 

By \cite[Theorem 2]{GM:97}, an operator $T: L_1\to Y$ is $\dns$ if and only it is tauberian in the sense of Kalton and Wilansky \cite{KaltonWil:76}. In this case the second conjugate $T^{**}: L_1^{**}\to Y^{**}$ and the ultrapowers $T_\U: (L_1)_\U\to Y_\U$ are also $\dns$, and the operator $T^{co}: L_1^{**}/L_1\to Y^{**}/Y$ induced by $T^{**}$ is an (into) isomorphism; see \cite{GM:97,GM:10}.

In this paper we extend these  results for $E=L_1$ to the operators in $\dns(E,Y)$ when $E$ is order continuous with a weak unit. Our main tool is the fact that in this case $E$ admits a representation and a dense sublattice of some $L_1(\mu)$ space with $\mu$ a probability measure. 
We characterize the operators in $\dns(E,Y)$ in terms of their action over the normalized sequences $(x_n)$ in $E$ satisfying $\lim_{n\to\infty} \mu(\supp x_n) =0$. As a consequence, $T$ is an isomorphism on the closed band $E_A$ of $E$ generated by a measurable set $A$ when $\mu(A)$ is small enough. 
Moreover, using the push-out construction, we show that every operator $T\in\dns(E,Y)$ admits an extension $\overline{T}\in \dns(L_1(\mu),\PO)$, where $\PO$ is the push-out Banach space. From this result, we derive that each $T\in\dns(E,Y)$ is a tauberian operator such that $T^{**}$ is tauberian and $T^{co}$ is an (into) isomorphism. 
Also, using a local variation of the notion of $\dns$ operator, we  prove that the class of $\dns$ operators is preserved by ultrapowers, we give an example showing that it is not preserved by ultraproducts, and we introduce  and study the $n,r$-dispersed subpaces, a local variation of the notion of dispersed subspace. Moreover, when $E$ contains no copies of $c_0$ and admits a weak unit, we  show that $T\in\dns(E,Y)$ implies $T^{**}\in \dns$. 

\subsection*{Notation}
Throughout the paper $X$ and $Y$ are Banach spaces, $E$ is a Banach lattice and $E_+=\{ x\in E : x\geq 0\}$. The unit sphere of $X$ is  $S_X=\{x\in X : \|x\|=1\}$, and for a sequence $(x_n)$ in $X$, $[x_n]$ denotes the closed subspace generated by $(x_n)$.
We also  denote $\ds(E)=\{(x_n)\subset E\setminus \{0\} : (x_n) \textrm{ disjoint} \}$, and $\dsn(E)=\{(x_n) \subset S_E : (x_n) \textrm{ disjoint} \}$. 
%and for a subset $A\subset E$, $A^d=\{x\in E : |x|\wedge|a|=0 \textrm{ for each }a\in A\}$. 

Operators always are linear and continuous, and $\Lc(X,Y)$ denotes the set of all operators from $X$ into $Y$. Given $T\in \Lc(X,Y)$, $N(T)$ is the kernel of $T$, $R(T)$ is the range of $T$, and we denote by $T_M$ the restriction of $T\in \Lc(X,Y)$ to a closed subspace $M$ of $X$. 

An operator $T\in\Lc(X,Y)$ is {\it strictly singular} if there is no closed infinite dimensional subspace $M$ of $X$ such that $T_M$ is an isomorphism; the operator $T$ is {\it upper semi-Fredholm} if $N(T)$ is finite dimensional and $R(T)$ is closed; and $T$ is \emph{tauberian} if its second conjugate $T^{**}:X^{**}\to Y^{**}$ satisfies $T^{**-1}(Y)=X$ \cite{KaltonWil:76}; equivalently, if the operator $T^{co}: X^{**}/X \to Y^{**}/Y$ induced by $T^{**}$ is injective. 
We refer to \cite{GSK:95} for the properties of $T^{co}$.

\section{Preliminaries}

An operator $T\in\Lc(E,Y)$ is \emph{disjointly strictly singular}, and we write $T\in \dss (E,Y)$, if there is no $(x_n)\in \ds(E)$ such that $T_{[x_n]}$ is an isomorphism. The class $\dss$ was introduced by Hern\'andez and Rodr\'\i guez-Salinas in \cite{Hernandez-Salinas:89} and \cite{Hernandez:90}. 

An operator $T\in \Lc(E,Y)$ is {\it disjointly non-singular}, and we write $T\in \dns (E,Y)$,  if there is no $(x_n)\in \ds(E)$ such that $T_{[x_n]}$ is strictly singular.  
These operators were introduced in \cite{GMM:20}, and studied in \cite{Bilokopytov:21} and \cite{GM:22}. Note that $\dns(L_1,Y)$ is the set of tauberian operators from $L_1$ into $Y$ (see \cite{GM:97,GMM:20}). We refer to \cite{GO:90} and \cite{GM:10} for information on tauberian operators. 
A closed subspace $M$ of $E$ is \emph{dispersed} if there is no $(x_n)\in \dsn(E)$ such that $\lim_{n\to\infty} \dist(x_n,M)=0$. 

A sequence $(x_n)$ in $E$ is \emph{unbounded norm convergent (or un-convergent)} to $x\in E$  if $(|x_n-x|\wedge u)$ converges in norm to $0$ for each $u\in E_+$ \cite{Troitsky:04}. In this case we write  $x_n  \overset{un}{\To} x$. 
%This notion of convergence was introduced by Troitsky in \cite{Troitsky:04}. 

The disjointly non-singular operators can be characterized as follows.

\begin{Thm}\label{dns-char} \cite[Theorems 2.8 and 2.10]{GMM:20}
For an operator $T\in\Lc(E,Y)$, the following assertions are equivalent:
\begin{enumerate}
\item $T$ is disjointly non-singular.
\item For every $(x_n)\in \ds(E)$, the restriction $T_{[x_n]}$ is an upper semi-Fredholm operator.
\item For every $(x_n)\in \dsn(E)$, $\liminf_{n\to \infty} \|Tx_n\|>0$. 
\item For every compact operator $S\in\Lc(E,Y)$, $N(T+S)$ is dispersed.
\end{enumerate}
\end{Thm}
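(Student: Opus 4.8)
The plan is to prove the cycle $(2)\Rightarrow(1)\Rightarrow(3)\Rightarrow(2)$ and, separately, the equivalence $(1)\Leftrightarrow(4)$. Two elementary lattice facts do the bookkeeping between the ``operator'' conditions and the ``disjoint sequence'' conditions. First, any $(x_n)\in\ds(E)$ is a $1$-unconditional basic sequence, since $\bigl|\sum a_ix_i\bigr|=\sum|a_i|\,|x_i|$; in particular $[x_n]$ is infinite dimensional, and when $(x_n)\in\dsn(E)$ every $z=\sum a_nx_n$ with $\|z\|\le 1$ satisfies $|a_n|\le 1$. Second, a finite block $y=\sum_{i\in I}a_ix_i$ again lies in $E$, and blocks over disjoint index sets are disjoint in $E$, so that successive normalized blocks of a sequence in $\dsn(E)$ again form a sequence in $\dsn(E)$. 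With these in hand, $(2)\Rightarrow(1)$ is immediate: an upper semi-Fredholm operator on the infinite dimensional space $[x_n]$ is bounded below on a finite-codimensional, hence infinite dimensional, subspace, so it is not strictly singular.

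For $(1)\Rightarrow(3)$ I argue by contraposition. If some $(x_n)\in\dsn(E)$ has $\liminf_n\|Tx_n\|=0$, pass to a subsequence with $\sum_k\|Tx_{n_k}\|<\infty$; then for $z=\sum_k a_kx_{n_k}$ with $\|z\|\le 1$ one has $\bigl\|\sum_{k>N}a_kTx_{n_k}\bigr\|\le\sum_{k>N}\|Tx_{n_k}\|\to 0$ uniformly, so $T_{[x_{n_k}]}$ is a norm limit of finite rank operators, hence compact, hence strictly singular, and $(x_{n_k})\in\ds(E)$ contradicts $(1)$. For $(3)\Rightarrow(2)$ I also use contraposition; the key point is the Fredholm lemma that if $T_{[x_n]}$ is bounded below on some finite-codimensional subspace of $[x_n]$ then $T_{[x_n]}$ is upper semi-Fredholm (its kernel injects into a finite dimensional quotient, and its range is a closed subspace plus a finite dimensional one). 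Hence if $T_{[x_n]}$ is not upper semi-Fredholm, then for each $N$ it is not bounded below on $[x_n : n\ge N]$, so one builds inductively $0=M_0<M_1<\cdots$ and normalized blocks $y_k\in[x_n : M_{k-1}<n\le M_k]$ with $\|Ty_k\|<2^{-k}$; then $(y_k)\in\dsn(E)$ with $\liminf_k\|Ty_k\|=0$, contradicting $(3)$. (The same construction, run on an arbitrary infinite dimensional subspace in place of a tail, will be reused below, and also yields $T_{[y_k]}$ compact.)

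It remains to treat $(1)\Leftrightarrow(4)$. For $(1)\Rightarrow(4)$, let $S\in\Lc(E,Y)$ be compact and suppose $N(T+S)$ is not dispersed: choose $(z_n)\in\dsn(E)$ with $\dist(z_n,N(T+S))\to 0$ and $w_n\in N(T+S)$ with $\|z_n-w_n\|\to 0$. Since $(w_n)$ is bounded and $Tw_n=-Sw_n$, after passing to a subsequence $Sw_{n_k}$ converges, so $T(w_{n_{2k}}-w_{n_{2k+1}})\to 0$; putting $u_k:=z_{n_{2k}}-z_{n_{2k+1}}$ one gets pairwise disjoint vectors with $1\le\|u_k\|\le 2$ and $\|u_k-(w_{n_{2k}}-w_{n_{2k+1}})\|\to 0$, whence $\|Tu_k\|\to 0$; after normalization this contradicts $(3)$, which is equivalent to $(1)$. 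For $(4)\Rightarrow(1)$, by contraposition take $(x_n)\in\ds(E)$ with $T_{[x_n]}$ strictly singular; since a strictly singular operator is bounded below on no infinite dimensional subspace, the inductive construction above produces a normalized block basis $(y_k)$ of $(x_n)$ with $\sum_k\|Ty_k\|<\infty$, so $(y_k)\in\dsn(E)$. Let $(y_k^*)\subset E^*$ be a uniformly bounded family of Hahn--Banach extensions of the coordinate functionals of the basic sequence $(y_k)$ (possible since $(y_k)$, being disjoint and normalized, is $1$-unconditional) and set $Sx:=-\sum_k y_k^*(x)\,Ty_k$; then $S$ is compact (a norm limit of finite rank operators, by summability) and $(T+S)y_k=0$ for all $k$, so $[y_k]\subset N(T+S)$ and $N(T+S)$ is not dispersed, i.e.\ $(4)$ fails.

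The substantive work lies in the two directions that convert abstract smallness of $T$ into an actual disjoint sequence. The block-basis extractions in $(3)\Rightarrow(2)$ and $(4)\Rightarrow(1)$ rely on the observation that non-upper-semi-Fredholm (respectively strictly singular) operators are bounded below on no finite-codimensional (respectively infinite dimensional) subspace, and the perturbation step in $(4)\Rightarrow(1)$ additionally needs the summability $\sum_k\|Ty_k\|<\infty$ to make $S$ compact. I expect $(1)\Rightarrow(4)$ to be the trickiest single step: there one only knows that $\{Tw_n\}=\{-Sw_n\}$ lies in a compact set, not that it tends to $0$, and it is precisely the passage to the differences $w_{n_{2k}}-w_{n_{2k+1}}$ — together with the fact that the corresponding $z_{n_{2k}}-z_{n_{2k+1}}$ stay disjoint and bounded away from $0$ — that removes the limit and produces a disjoint normalized sequence on which $T$ vanishes asymptotically.
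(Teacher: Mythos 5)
Your argument is correct, but note that the paper itself offers no proof of this theorem: it is quoted verbatim from \cite[Theorems 2.8 and 2.10]{GMM:20}, so there is nothing internal to compare against. As a self-contained proof your cycle $(2)\Rightarrow(1)\Rightarrow(3)\Rightarrow(2)$ plus $(1)\Leftrightarrow(4)$ works: the lattice bookkeeping ($1$-unconditionality of disjoint sequences, $|a_n|\le\|z\|$ for normalized disjoint expansions, disjointness of blocks over disjoint index sets) is right; the compactness of $T_{[x_{n_k}]}$ from $\sum_k\|Tx_{n_k}\|<\infty$ is right; the difference trick $u_k=z_{n_{2k}}-z_{n_{2k+1}}$ in $(1)\Rightarrow(4)$ correctly converts relative compactness of $\{Sw_n\}$ into $\|Tu_k\|\to 0$ while keeping $(u_k)$ disjoint with $\|u_k\|\ge 1$; and the compact perturbation $Sx=-\sum_k y_k^*(x)Ty_k$ with norm-one Hahn--Banach extensions of the coordinate functionals indeed puts $[y_k]$ inside $N(T+S)$. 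The one step you state without detail is the extraction, in $(3)\Rightarrow(2)$ and $(4)\Rightarrow(1)$, of \emph{finitely supported} normalized blocks $y_k$ with $\|Ty_k\|<2^{-k}$: failure of being bounded below on the tail $[x_n:n\ge N]$ only gives almost-null normalized vectors in the \emph{closed} span, and one must approximate such a vector by a finite truncation of its basis expansion (a small perturbation keeps $\|Ty_k\|$ small and $\|y_k\|$ near $1$) before normalizing; this is routine but should be said. With that remark added, your proof is complete and follows the same standard gliding-hump/perturbation scheme one expects (and that \cite{GMM:20} uses) for these equivalences.
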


%Moreover, Bilokopytov obtained the following result. 

\begin{Thm}\label{dns-char-oc} \cite[Theorem 5.3]{Bilokopytov:21}
Suppose that $E$ is order continuous. For $T\in\Lc(E,Y)$, the following assertions are equivalent:
\begin{enumerate}
\item $T$ is disjointly non-singular.
\item For no normalized $un$-null sequence $(x_n)$ we have $\lim_{n\to\infty} \|Tx_n\|=0$.
\item There exists $r>0$ such that for every $(x_n)\in \dsn(E)$, $\liminf_{n\to \infty} \|Tx_n\|>r$.
\end{enumerate}
\end{Thm}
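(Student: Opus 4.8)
The plan is to take Theorem~\ref{dns-char} as the starting point — its item~(3), ``$\liminf_n\|Tx_n\|>0$ for every $(x_n)\in\dsn(E)$'', is the non-uniform version of assertion~(3) and is equivalent to~(1) with no order-continuity hypothesis — and to prove the cycle $(3)\Rightarrow(1)\Rightarrow(2)\Rightarrow(3)$. The implication $(3)\Rightarrow(1)$ is immediate, since the hypothesis in~(3) is formally stronger than item~(3) of Theorem~\ref{dns-char}.

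For $(1)\Rightarrow(2)$ I would argue by contraposition, the crux being a disjointification lemma. Suppose $(x_n)$ is normalized, $x_n\overset{un}{\To}0$, and $\|Tx_n\|\to 0$. Passing to the band generated by $(x_n)$ — still order continuous, and now with a weak unit — I may assume $E$ has a weak unit $e$, hence is represented as an ideal with $L_\infty(\mu)\subseteq E\subseteq L_1(\mu)$ for a probability $\mu$, with $e$ corresponding to $\mathbf 1$ and continuous inclusions. Since $\||x_n|\wedge(\lambda e)\|\to 0$ for each fixed $\lambda$, a diagonalization produces $\lambda_n\to\infty$ with $\||x_n|\wedge(\lambda_n e)\|\to 0$; then, writing $A_n:=\{|x_n|>\lambda_n\}$, the cut-offs $d_n:=x_n\mathbf 1_{A_n}$ satisfy $\|x_n-d_n\|\leq\||x_n|\wedge(\lambda_n e)\|\to 0$, while $\mu(A_n)\to 0$ by Chebyshev's inequality (note $\sup_n\|x_n\|_{L_1(\mu)}<\infty$ by continuity of the inclusion). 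A gliding-hump extraction now yields a disjoint sequence: one selects a subsequence $(x_{n_k})$ growing fast enough that $\sum_{k>j}\bigl\|\,|x_{n_j}|\,\mathbf 1_{A_{n_k}}\bigr\|<\infty$ for each fixed $j$ — possible because for a fixed $y\in E$ one has $\|y\mathbf 1_S\|\to 0$ as $\mu(S)\to 0$, by order continuity of the norm — so that the pairwise disjoint sets $\widetilde A_k:=A_{n_k}\setminus\bigcup_{j>k}A_{n_j}$ satisfy $\|x_{n_k}-x_{n_k}\mathbf 1_{\widetilde A_k}\|\to 0$. Normalizing these disjoint vectors $x_{n_k}\mathbf 1_{\widetilde A_k}$ produces a sequence in $\dsn(E)$ along which $\|T\cdot\|\to 0$, contradicting item~(3) of Theorem~\ref{dns-char}; so $T\notin\dns(E,Y)$.

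For $(2)\Rightarrow(3)$ I would again use contraposition. If~(3) fails, then for every $m\in\N$ there is $(x^m_n)_n\in\dsn(E)$ with $\liminf_n\|Tx^m_n\|\leq 1/m$. I would work in the band $B$ generated by the countable set $\{x^m_n\}$, which is order continuous with weak unit $v:=\sum_{m,n}2^{-(m+n)}|x^m_n|$. A bounded disjoint sequence in an order continuous Banach lattice is $un$-null, because by disjointness $\sum_{n\leq N}(|x^m_n|\wedge v)=\bigvee_{n\leq N}(|x^m_n|\wedge v)\leq v$ for every $N$, so this increasing sequence converges in norm; hence $\||x^m_n|\wedge v\|\to 0$ as $n\to\infty$ for each $m$. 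Choosing $n_m$ with $\||x^m_{n_m}|\wedge v\|<1/m$ and $\|Tx^m_{n_m}\|<2/m$, and setting $z_m:=x^m_{n_m}$, we obtain a normalized sequence with $\|Tz_m\|\to 0$ and $z_m\overset{un}{\To}0$ in $B$ (as $v$ is a weak unit); since $B$ is a projection band of $E$, also $z_m\overset{un}{\To}0$ in $E$, which contradicts~(2).

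The step I expect to be the main obstacle is the disjointification lemma used in $(1)\Rightarrow(2)$. The naive recipe — peeling off, at each stage, the band generated by the terms already selected — fails because the sequence may lie entirely inside one fixed band; the idea above is to first localize the ``mass'' of each $x_n$ on sets $A_n$ of small measure, which is precisely where the $L_1(\mu)$ representation enters, and only then to disjointify those sets along a sufficiently rapid subsequence, order continuity of the norm absorbing the perturbation errors. The remaining ingredients — the reduction to a space with a weak unit, the transfer of $un$-convergence between a projection band and the ambient lattice, and the fact that bounded disjoint sequences are $un$-null — are routine.
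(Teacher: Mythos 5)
The paper gives no proof of Theorem \ref{dns-char-oc}; it is quoted directly from \cite[Theorem 5.3]{Bilokopytov:21}, so there is no internal argument to compare against — what you have produced is a self-contained substitute, and it is essentially correct. The cycle works: (3)$\Rightarrow$(1) is indeed immediate from Theorem \ref{dns-char}(3); in (1)$\Rightarrow$(2) the reduction to the band generated by $(x_n)$ is a necessary step (the theorem assumes no weak unit) and you handle it correctly, since un-nullity passes to the band and disjointness in the band is disjointness in $E$; the truncation at levels $\lambda_n\to\infty$, Chebyshev to get $\mu(A_n)\to 0$, and the gliding-hump disjointification of the supports is in fact the same mechanism the paper itself uses later in proving Theorem \ref{supp-dns}, (1)$\Rightarrow$(2). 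Your auxiliary claim that $\|y\chi_S\|\to 0$ as $\mu(S)\to 0$ for fixed $y\in E$ is correct (it is dominated un-convergence, via Lemma \ref{unconv-e}). In (2)$\Rightarrow$(3), the facts you invoke are all valid: for fixed $m$ the elements $|x^m_n|\wedge v$ are pairwise disjoint, so the partial sums coincide with suprema bounded by $v$ and converge in norm by order continuity (order continuous lattices are Dedekind complete), forcing $\||x^m_n|\wedge v\|\to 0$; Lemma \ref{unconv-e} applies in the band $B$ with weak unit $v$; and un-nullity transfers from the projection band $B$ to $E$ via the band projection. Bilokopytov's own proof is phrased through the unbounded-norm topology, whereas yours goes through the K\"othe/$L_1(\mu)$ representation after localizing to countably generated bands, which is closer in spirit to Section 3 of this paper; both are legitimate, and yours has the virtue of resting only on Theorem \ref{dns-char} and Lemma \ref{unconv-e}.

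One bookkeeping slip: in the gliding hump you only ask that $\sum_{k>j}\bigl\||x_{n_j}|\chi_{A_{n_k}}\bigr\|<\infty$ for each fixed $j$; mere finiteness of these sums does not yield $\|x_{n_j}-x_{n_j}\chi_{\widetilde A_j}\|\to 0$ as $j\to\infty$. The recursive selection you describe actually provides more — choosing $n_{k+1}$ so that $\||x_{n_i}|\chi_{A_{n_{k+1}}}\|<2^{-(k+1)}$ for all $i\le k$ makes the tail at most $2^{-j}$ — so the defect is purely in how the condition is stated, not in the construction; just record the $\varepsilon_j$-control explicitly.
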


A Banach lattice $E$ is \emph{order continuous} if every net in $E$ decreasing in order to $0$ converges in norm to $0$; and  
a \emph{weak unit} in $E$ is a vector $e\in E_+$ such that $\||x| \wedge e\|=0$ implies $x=0$. We refer to \cite{LT-II:79,Wnuk:99} for information on order continuous Banach lattices. 

\subsection*{Representation of order continuous Banach lattices}\label{rep-BL} In \cite[Theorem 1.b.14]{LT-II:79} it is shown that every order continuous Banach lattice $E$ with a weak unit admits a representation as a K\"othe function space, in the sense that there exists a probability space $(\Omega,\Sigma,\mu)$ so that
\begin{itemize}
\item $L_\infty(\mu) \subset E \subset L_1(\mu)$ with $E$ dense in $L_1(\mu)$ and $L_\infty(\mu)$ dense in $E$, 
\item $\|f\|_1\leq \|f\|_E\leq 2\|f\|_\infty$ when $f\in L_\infty(\mu)$, 
\item the order in $E$ coincides with the one induced by $L_1(\mu)$. 
\end{itemize}

In the paper we will use this representation without further comments. For vectors in $L_1(\mu)$, we denote by $x_n \overset{\mu}{\To} x$ the \emph{convergence in measure.} 

Among the order continuous Banach lattices with a weak unit we have some rearrangement  invariant (r.i., for short) function spaces on $(0,1)$. Besides $L_p(0,1)$  ($1\leq p<\infty$), the most commonly used r.i. function spaces on $(0,1)$ are the Orlicz spaces and the Lorentz spaces (see \cite[Section 2a]{LT-II:79}). Below we give a brief description of the second ones.   

%(Lorentz spaces on $(0,1)$;

\begin{Example}  
%\begin{enumerate}
%\item Let $M$ be an Orlicz function on $[0,\infty)$; i.e. a continuous convex increasing function satisfying $M(0)=0$ and $M(t)\to\infty$ as $t\to\infty$. The \emph{Orlicz space $L_M(0,1)$} is the space of all measurable functions on (0,1) so that $\int_0^1 M(|f(t)|/r) dt<\infty$ for some $r>0$, with the norm $\|f\|_M=\inf\{r>0 : \int_0^1 M(|f(t)|/r) dt\leq 1\}$.  
%\item 
Let $1\leq p<\infty$ and let $W$ be a positive non-increasing continuous function on $(0,1]$ so that $\lim_{t\to 0}W(t)=\infty$, $W(1)=1$ and  $\int_0^1 W(t)=1$. The \emph{Lorentz function space $L_{W,p}(0,1)$} is the space of all measurable functions $f$ on $(0,1)$ such that 
$$
\|f\|_{W,p} =\left(\int_0^1 f^*(t)^p W(t)dt \right)^{1/p}<\infty, 
$$
where $f^*$ is the decreasing rearrangement of $|f|$. 

The space $L_{W,p}(0,1)$ is a r.i.\ function space on $(0,1)$ different from $L_1(0, 1)$. 
%\end{enumerate}
\end{Example}

%
%Each bounded sequence $(x_n)\subset E$ is contained in a closed ideal of $E$ with a weak unit. Indeed, $e=\sum_{n=1}^\infty |x_n|/2^{-n}$ is a weak unit in the closed ideal generated by $(x_n)$. This fact allows us to state some results omitting the existence of a weak unit. 
%
%We also need the following result.  
%
%\begin{Lemma}\label{rem:ord-cont} Let $E$ be an  order continuous Banach lattice with a weak unit. If $f \in E$ and $(A_k)$ is a disjoint sequence in the $\sigma$-algebra $\Sigma$ associated to $E$, then $\lim_{k\to\infty}\|f 1_{A_k}\|_E=0$. 
%\end{Lemma}
%\begin{proof}
%Let $B_k=\cup_{i=k}^\infty A_i$. Since the norm on $E$ is order continuous, $(B_k)$ is decreasing and $\lim_{k \rightarrow \infty} \mu (B_k) =0$ we have $\lim_{k \to \infty} \|f 1_{B_k} \|_E = 0$, hence $\lim_{k\to\infty} \|f 1_{A_k}\|_E= 0$.
%\end{proof}
%
%for every $\varepsilon>0$ there is $n_\varepsilon$ such that for $n\geq n_\varepsilon$ we have $\mu(\{t : |x_n(t)-x(t)|\geq \varepsilon\}<\varepsilon$. 

The following result will be useful. 

\begin{Lemma}\emph{\cite[Corollary 2.12, Theorem 4.6]{DBT:17}}\label{unconv-e} 
Let $E$ be an order continuous Banach lattice with a weak unit $e$, and let $(x_n)\subset E$. Then the following statements are equivalent: 
\begin{enumerate}
\item $x_n \overset{un}{\To} 0$. 
\item $(|x_n|\wedge e)$ converges in norm to $0$. 
\item $x_n \overset{\mu}{\To} 0$. 
\end{enumerate}
\end{Lemma}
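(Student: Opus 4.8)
The plan is to establish the cycle $(1)\Rightarrow(2)\Rightarrow(3)\Rightarrow(1)$, using throughout the representation recalled above, so that $L_\infty(\mu)\subset E\subset L_1(\mu)$ as a K\"othe function space with $\|f\|_1\le\|f\|_E$ for every $f\in E$, and the fact that an order continuous Banach lattice is Dedekind complete. The implication $(1)\Rightarrow(2)$ is immediate, being the case $u=e$ of the definition of un-convergence together with $e\in E_+$.

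For $(2)\Rightarrow(3)$ I would first note that $e>0$ $\mu$-a.e.: if $e$ vanished on a set $A$ with $\mu(A)>0$, then $\mathbf 1_A\in L_\infty(\mu)\subset E$ would be a non-zero vector with $|\mathbf 1_A|\wedge e=0$, contradicting that $e$ is a weak unit. Fix $\e>0$. Since $\|\cdot\|_1\le\|\cdot\|_E$ on $E$, hypothesis (2) gives $\big\|\,|x_n|\wedge e\,\big\|_1\to0$, and for each $\dd>0$,
\[
\mu\big(\{|x_n|>\e\}\big)\ \le\ \mu\big(\{e\le\dd\}\big)\ +\ \mu\big(\{|x_n|>\e\}\cap\{e>\dd\}\big)\ \le\ \mu\big(\{e\le\dd\}\big)\ +\ \frac{1}{\min(\e,\dd)}\big\|\,|x_n|\wedge e\,\big\|_1 ,
\]
where the second inequality uses $|x_n|\wedge e\ge\min(\e,\dd)$ on $\{|x_n|>\e\}\cap\{e>\dd\}$ together with Chebyshev's inequality. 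Since $e>0$ a.e., $\mu(\{e\le\dd\})\to0$ as $\dd\to0$, so letting first $\dd\to0$ and then $n\to\infty$ yields $\mu(\{|x_n|>\e\})\to0$; that is, $x_n\overset{\mu}{\To}0$.

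For $(3)\Rightarrow(1)$, fix $u\in E_+$ and put $g_n:=|x_n|\wedge u$, so that $0\le g_n\le u$ and $g_n\overset{\mu}{\To}0$ (because $\{g_n>\e\}\subset\{|x_n|>\e\}$). Suppose, towards a contradiction, that $\|g_n\|_E\not\to0$, and choose $\dd>0$ and a subsequence with $\|g_{n_k}\|_E\ge\dd$ for all $k$. Passing to a further subsequence, again denoted $(g_{n_k})$, we may assume $g_{n_k}\to0$ $\mu$-a.e. Being order-bounded by $u$, the essential suprema $h_m:=\sup_{k\ge m}g_{n_k}$ belong to $E$; the sequence $(h_m)$ decreases, and its infimum in $E$ is $0$ because pointwise $\inf_m h_m=\limsup_k g_{n_k}=0$ $\mu$-a.e. and the order of $E$ is inherited from $L_1(\mu)$. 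Thus $h_m\downarrow0$ in $E$, so order continuity gives $\|h_m\|_E\to0$; since $g_{n_m}\le h_m$ this forces $\|g_{n_m}\|_E\to0$, contradicting $\|g_{n_k}\|_E\ge\dd$. Hence $\|\,|x_n|\wedge u\,\|_E\to0$ for all $u\in E_+$, i.e.\ $x_n\overset{un}{\To}0$.

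I expect $(3)\Rightarrow(1)$ to be the only real obstacle: converting the measure-theoretic hypothesis into norm convergence in $E$. The device — extract an a.e.-convergent subsequence, dominate its tail by the order-decreasing null sequence $(h_m)$, and apply order continuity — is precisely where the K\"othe representation and the order continuity of $E$ are genuinely used; the implication is false without order continuity, as $x_n=\mathbf 1_{(0,1/n)}$ in $L_\infty(0,1)$ shows.
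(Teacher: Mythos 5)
Your proof is correct. Note, however, that the paper does not prove this lemma at all: it is imported verbatim from Deng--O'Brien--Troitsky \cite{DBT:17}, where Corollary 2.12 gives the equivalence (1)$\Leftrightarrow$(2) (un-convergence can be tested against a single weak unit) and Theorem 4.6 identifies un-convergence with convergence in measure under the K\"othe representation. What you have done is supply a self-contained proof of the cited facts, and it is essentially the standard argument behind them: (1)$\Rightarrow$(2) by specializing $u=e$; (2)$\Rightarrow$(3) via strict positivity a.e.\ of the weak unit plus Chebyshev on $\{e>\dd\}$; and (3)$\Rightarrow$(1) by extracting an a.e.-null subsequence, dominating its tails by the decreasing sequence $h_m=\sup_{k\ge m}g_{n_k}\in E$ (using that $E$ is an ideal of $L_1(\mu)$, so $0\le h_m\le u$ puts $h_m$ in $E$), and invoking order continuity -- this is exactly where the hypothesis is needed, as your $L_\infty(0,1)$ example shows. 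Two cosmetic points you may wish to tidy: the inequality $\|f\|_1\le\|f\|_E$ is stated in the paper only for $f\in L_\infty(\mu)$, so for general $f\in E$ (your $|x_n|\wedge e$ need not be bounded) you should add the one-line extension via $f\wedge n\mathbf 1\uparrow f$ and monotone convergence; and in (2)$\Rightarrow$(3) the limits should be taken in the order ``fix $\dd$, let $n\to\infty$, then $\dd\to0$'' (or: choose $\dd$ small first), since letting $\dd\to0$ for fixed $n$ destroys the Chebyshev term.
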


For an order continuous Banach lattice $E$ with a weak unit $e$, we define the \emph{support} of $x\in E$ as $\supp x =\{t\in \Omega : x(t)\neq 0\}$. 

\begin{Cor} \label{disj=un-null} 
Suppose that $E$ is order continuous with a weak unit. Then each  sequence $(x_k)$ in $E$ with $\lim_{n\to \infty} \mu \left(\supp x_n\right) =0$ is un-convergent to $0$.  
\end{Cor}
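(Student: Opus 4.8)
The plan is to reduce the assertion to convergence in measure and then quote Lemma~\ref{unconv-e}. Since $E$ is order continuous with a weak unit $e$, the K\"othe function space representation recalled above is available, so both $\supp x$ and the relation $\overset{\mu}{\To}$ make sense, and Lemma~\ref{unconv-e} applies to the sequence $(x_n)$ with respect to this $e$.

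First I would fix $\varepsilon>0$ and use the pointwise inclusion $\{t\in\Omega : |x_n(t)|\geq\varepsilon\}\subseteq\supp x_n$, which gives $\mu(\{|x_n|\geq\varepsilon\})\leq\mu(\supp x_n)$ for every $n$. Passing to the limit and invoking the hypothesis $\mu(\supp x_n)\to 0$, we obtain $\mu(\{|x_n|\geq\varepsilon\})\to 0$; since $\varepsilon>0$ was arbitrary, this is precisely $x_n\overset{\mu}{\To}0$. The implication $(3)\Rightarrow(1)$ of Lemma~\ref{unconv-e} then yields $x_n\overset{un}{\To}0$, as required.

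I do not expect any real obstacle here: all the substance is carried by Lemma~\ref{unconv-e}, and the rest is just monotonicity of $\mu$. Should one wish to bypass that lemma, a direct route is to check $\||x_n|\wedge u\|_E\to 0$ for each $u\in E_+$ from the estimate $0\leq|x_n|\wedge u\leq u\,\chi_{\supp x_n}$, using that an order continuous Banach lattice has absolutely continuous norm, i.e.\ $\|u\,\chi_{A_n}\|_E\to 0$ whenever $\mu(A_n)\to 0$; this last fact follows from order continuity by a standard subsequence-and-dominated-convergence argument. In either case the proof is very short.
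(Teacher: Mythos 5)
Your argument is correct and is essentially the paper's own proof: the paper simply notes that $\mu(\supp x_n)\to 0$ implies $x_n\overset{\mu}{\To}0$ and then invokes Lemma~\ref{unconv-e}, which is exactly your main route (you merely spell out the monotonicity step and add an optional direct verification). Nothing further is needed.
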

\begin{proof} 
Note that $\lim_{n\to \infty} \mu \left(\supp x_n\right) =0$ implies $x_n \overset{\mu}{\To} 0$. 
\end{proof}

%Note that a sequence $(f_n)$ in $C[0,1]$ is un-convergent if and only if it is norm convergent: $\||f_n|\wedge \textbf{1} \|_\infty \to 0$ implies  $\|f_n\|_\infty \to 0$. 

\subsection*{Ultraproducts of spaces and operators}\label{ultra-sect}

Let $I$ be a set admitting a non-trivial ultrafilter $\U$ and let  $(X_i)_{i\in I}$ and $(Y_i)_{i\in I}$ be families of Banach spaces. The ultraproduct  $(X_i)_\U$ of $(X_i)_{i\in I}$ is defined as the quotient of $\ell_\infty(I,X_i)$ by the closed subspace 
$$N_\U(X_i)=\{(x_i)\in \ell_\infty(I,X_i) :\lim_{i\to\U} \|x_i\|=0\}.$$ 
The element of $(X_i)_\U$ which has $(x_i)\in\ell_\infty(I,X_i)$ as a representative is denoted $[(x_i)]$. 

When $X_i=X$ for each $i\in I$, we denote the ultraproduct by  $X_\U$, and we call it  an ultrapower of $X$. 

If each $X_i$ is a Banach lattice then  $(X_i)_\U$ has a natural structure of Banach lattice: $[(x_i)] \leq [(y_i)]$ if there exists $(z_i) \in N_\U(X_i)$ such that $x_i+z_i \leq y_i$ for each $i\in I$. 

If $(T_i)_{i\in I}$ is a bounded family of operators with $T_i\in \Lc(X_i,Y_i)$ for each $i\in I$, the ultraproduct $(T_i)_\U\in \Lc((X_i)_\U, (Y_i)_\U)$ is defined by $(T_i)_\U[(x_i)]= [(T_ix_i)]$. When $T_i=T$ for each $i\in I$, we write $T_\U$ which is called an ultrapower of $T$. We refer to \cite[Chapter 8]{DJT:95} or \cite{Heinrich} for additional information on ultraproducts of spaces and operators. 

%%%%%%%%%%%%%%%%%%%%%%%%%%

\section{Disjointly non-singular operators}

We begin with a complement to 
Theorem \ref{dns-char-oc}. 

\begin{Thm}\label{supp-dns}
Let $E$ be an order continuous Banach lattice with a weak unit. For $T\in\Lc(E,Y)$, the following assertions are equivalent: 
\begin{enumerate}
\item $T$ is disjointly non-singular. 
\item There exists $r>0$ such that for every  $(x_n)$ in $S_E$ with $\lim_{n\to \infty} \mu\left(\supp x_n \right) =0$,  $\liminf_{n\rightarrow \infty} \|Tx_n\|>r$. 
\item For every  $(x_n)$ in $S_E$ with $\lim_{n\to \infty} \mu\left(\supp x_n \right) =0$,  $\liminf_{n\rightarrow \infty} \|Tx_n\|>0$. 
\item There is $r>0$ such that for every  $x\in S_E$ with $\mu(\supp x)<r$ we have  $\|Tx\|>r$.
\end{enumerate}
\end{Thm}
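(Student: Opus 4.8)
The plan is to prove the equivalences through the cycle $(1)\Rightarrow(4)\Rightarrow(2)\Rightarrow(3)\Rightarrow(1)$, using Theorems \ref{dns-char} and \ref{dns-char-oc} and Corollary \ref{disj=un-null}, together with one elementary observation about the representation of $E$ inside $L_1(\mu)$: if $(x_n)\in\ds(E)$ then the sets $\supp x_n$ are pairwise disjoint up to $\mu$-null sets, because the order of $E$ is the one induced by $L_1(\mu)$, so $|x_n|\wedge|x_m|=0$ forces $x_n$ and $x_m$ to vanish a.e.\ off each other's complement. Since $\mu$ is a probability measure, $\sum_n\mu(\supp x_n)\le 1$, hence $\mu(\supp x_n)\to 0$. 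In other words, every disjoint normalized sequence in $E$ is, in particular, a normalized sequence with $\lim_n\mu(\supp x_n)=0$; this is what will let me pass back and forth between the disjoint-sequence formulations of Theorems \ref{dns-char}, \ref{dns-char-oc} and the support formulations (2)--(4).

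For $(1)\Rightarrow(4)$ I argue by contraposition. If (4) fails, then for each $n\in\N$ there is $x_n\in S_E$ with $\mu(\supp x_n)<1/n$ and $\|Tx_n\|\le 1/n$; by Corollary \ref{disj=un-null} the sequence $(x_n)$ is normalized and $un$-null while $\|Tx_n\|\to 0$, so $T$ is not disjointly non-singular by Theorem \ref{dns-char-oc}, assertion (2). For $(4)\Rightarrow(2)$, suppose (4) holds with constant $r>0$; I claim the constant $r/2$ works in (2). Indeed, given $(x_n)$ in $S_E$ with $\mu(\supp x_n)\to 0$, we have $\mu(\supp x_n)<r$ for all large $n$, hence $\|Tx_n\|>r$ for such $n$, so $\liminf_n\|Tx_n\|\ge r>r/2$. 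The implication $(2)\Rightarrow(3)$ is immediate.

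Finally, $(3)\Rightarrow(1)$, again by contraposition: if $T$ is not disjointly non-singular then, by Theorem \ref{dns-char}, assertion (3), there is $(x_n)\in\dsn(E)$ with $\liminf_n\|Tx_n\|=0$; choosing a subsequence $(x_{n_k})$ with $\|Tx_{n_k}\|\to 0$ and invoking the observation above (so $\mu(\supp x_{n_k})\to 0$), we obtain a normalized sequence with shrinking supports along which $\|Tx_{n_k}\|\to 0$, contradicting (3). None of these steps is long; I expect the only point requiring genuine care to be the elementary remark that disjointness of vectors of $E$, read inside $L_1(\mu)$, makes their supports essentially disjoint and hence of measure tending to $0$ — this is the bridge between the two families of characterizations. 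A secondary (very minor) point is the bookkeeping of the strict inequalities and of the loss of a factor in the constant when moving from (4) to (2).
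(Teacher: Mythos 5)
Your cycle $(1)\Rightarrow(4)\Rightarrow(2)\Rightarrow(3)\Rightarrow(1)$ is correct, but it is organised differently from the paper's proof, and the difference is substantive in the hard direction. The paper proves $(1)\Rightarrow(2)$ directly: it takes the uniform constant $r$ from assertion (3) of Theorem \ref{dns-char-oc}, assumes (2) fails, and then runs an explicit disjointification argument --- truncating the vectors $x_n$ by the characteristic functions $\chi_{B_{n_k}}$ of the complements of the tails of the supports, using order continuity to make the truncation errors small --- to manufacture a genuine normalized disjoint sequence $(z_n)$ with $\limsup\|Tz_n\|<r/2$, contradicting the choice of $r$. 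You avoid that construction entirely: your $(1)\Rightarrow(4)$ goes through assertion (2) of Theorem \ref{dns-char-oc} (the un-null formulation) combined with Corollary \ref{disj=un-null}, which converts ``$\mu(\supp x_n)\to 0$'' into un-convergence; the uniformity required in (2) then comes for free from (4). This is a softer and shorter argument, at the price of leaning more heavily on the cited un-convergence machinery, whereas the paper's construction only needs the uniform-$r$ statement (3) of Theorem \ref{dns-char-oc} and exhibits the offending disjoint sequence explicitly. Your closing implication $(3)\Rightarrow(1)$ rests on the observation that a normalized disjoint sequence has pairwise essentially disjoint supports in the $L_1(\mu)$ representation (the order of $E$ being the induced one), so $\sum_n\mu(\supp x_n)\le 1$ and $\mu(\supp x_n)\to 0$; this is exactly the fact the paper uses, without spelling it out, in its implication $(4)\Rightarrow(1)$, so making it explicit is a small but welcome clarification. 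The remaining steps ($(4)\Rightarrow(2)$, $(2)\Rightarrow(3)$, and the contrapositive extraction of a sequence with $\mu(\supp x_n)<1/n$, $\|Tx_n\|\le 1/n$) match the paper's trivial implications, and your bookkeeping of the constants (using $r/2$ in (2) to keep the inequality strict) is fine.
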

\begin{proof}
(1)$\Rightarrow$(2) Suppose that $T$ is disjointly non-singular. Without loss of generality, we can assume that  $\|T\|=1$. By Theorem \ref{dns-char-oc}, there is $r>0$ such that for every disjoint sequence $(z_n)$ in $S_E$, $\liminf_{n\to \infty} \|Tz_n\|>r$. 

If (2) fails, then we can find a  sequence $(x_n)$ in $S_E$ with $\lim_{n\to \infty} \mu\left(\supp x_n \right) =0$ and  $\liminf_{n\to \infty} \|Tx_n\|<r/2$. Passing to a subsequence if necessary, we can assume that $\limsup_{n\to \infty} \|Tx_n\|<r/2$ and $\sum_{n=1}^\infty \mu\left(\supp x_n \right) <\infty$. We denote $A_n= \cup_{k=n}^\infty \supp x_n$ and $B_n=\Omega \setminus A_n$. 
Then $\mu(A_n)\to 0$, $(B_n)$ increases to $\Omega$ and, since $E$ is order continuous, $(x\chi_{B_n})$ converges in norm to $x$ for every $x\in E$ \cite[Theorem 1.1]{Wnuk:99}. 

First, we choose $n_1>1$ such that $\|x_1- x_1 \chi_{B_{n_1}}\|<1/2$, and denote $y_1= x_1 \chi_{B_{n_1}}$. Note that $\|Tx_1- Ty_1 \|<1/2$ and $|y_1|\wedge |x_j|=0$ for $j\geq n_1$. 

Next, we choose $n_2>n_1$ such that $\|x_{n_1}- x_{n_1} \chi_{B_{n_2}}\|<1/3$, and denote $y_2= x_{n_1} \chi_{B_{n_2}}$. Note that $\|Tx_{n_1}- Ty_2 \|<1/3$ and $|y_i|\wedge |x_j|=0$ for $i=1,2$ and $j\geq n_2$. 

Continuing in this way we obtain a disjoint sequence $(y_n)$ such that $\|y_n\|\to 1$ as $n\to\infty$ and $\limsup_{n\to \infty} \|Ty_n\|<r/2$. Thus taking $z_n=y_n/\|y_n\|$, we obtain a normalized disjoint sequence $(z_n)$ with $\limsup_{n\to \infty} \|Tz_n\|<r/2$, and we get a contradiction.

(2)$\Rightarrow$(3) is trivial. 

(3)$\Rightarrow$(4) If (4) fails, we can find a sequence $(x_n)$ in $S_E$ with $\mu(\supp x_n)<1/n$ and $\|Tx_n\|<1/n$. So (3) also fails. 

(4)$\Rightarrow$(1) For every disjoint sequence $(x_n)$ in $S_E$, $\liminf_{n\to \infty} \|Tx_n\|\geq r$. Thus Theorem \ref{dns-char-oc} implies that $T$ is disjointly non-singular. 
\end{proof}

If $E$ is order continuous with a weak unit, for each measurable set $A$ with $\mu(A)>0$, the set $E_A=\{x\in E : \supp x\subset A\}$ is a closed band in $E$. 

From part (4) of Theorem  \ref{supp-dns} we derive:

\begin{Cor}\label{cor1}
If $E$ is order continuous with a weak unit and $T\in\dns(E,Y)$, then there exists $r>0$ such that, when $A$ is a measurable set with $0<\mu(A)<r$, the restriction of $T$ to $E_A$ is an isomorphism. 
\end{Cor}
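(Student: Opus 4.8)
The plan is to apply part (4) of Theorem~\ref{supp-dns} directly. That theorem provides a constant $r>0$ such that $\|Tx\|>r$ whenever $x\in S_E$ satisfies $\mu(\supp x)<r$. I would take this same $r$ as the constant claimed in the corollary.

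Fix a measurable set $A$ with $0<\mu(A)<r$. The first step is to observe that every nonzero $x\in E_A$ has $\supp x\subset A$, hence $\mu(\supp x)\le\mu(A)<r$; applying part (4) of Theorem~\ref{supp-dns} to $x/\|x\|\in S_E$ and using homogeneity of $T$ gives $\|Tx\|\ge r\|x\|$ for all $x\in E_A$. The second step is to record that $E_A$ is a closed band in $E$ (as noted just before the corollary), hence a Banach space in its own right, so the estimate $\|T_{E_A}x\|\ge r\|x\|$ shows that $T_{E_A}$ is bounded below; therefore it is injective with closed range, i.e.\ an isomorphism onto its image.

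There is essentially no obstacle here: the corollary is a routine consequence of the quantitative characterization in Theorem~\ref{supp-dns}(4), the only points worth spelling out being that the support of a vector in $E_A$ is contained in $A$ and that a bounded-below operator on a Banach space is an into isomorphism.
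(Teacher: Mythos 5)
Your proof is correct and is exactly the route the paper intends: the corollary is stated as an immediate consequence of part (4) of Theorem \ref{supp-dns}, and your argument (pass to $x/\|x\|$, get $\|Tx\|\ge r\|x\|$ on the closed band $E_A$, conclude $T_{E_A}$ is bounded below and hence an into isomorphism) fills in precisely the routine details the paper leaves implicit.
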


As a consequence we get: 

\begin{Cor}
Let $E$ be an order continuous  r.i. function space on $[0,1]$ and suppose that $\dns(E,Y)\neq \emptyset$. Then $Y$ contains a subspace isomorphic to $E$. 
\end{Cor}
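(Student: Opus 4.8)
The plan is to deduce the statement from Corollary~\ref{cor1} together with the classical fact that, in a rearrangement-invariant function space on $[0,1]$, the band of functions supported on a subset of positive measure is isomorphic to the whole space.

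First I would record that an order continuous r.i.\ function space $E$ on $[0,1]$ is automatically order continuous \emph{with a weak unit}: indeed $\chi_{[0,1]}\in L_\infty[0,1]\subseteq E$, and $\||x|\wedge\chi_{[0,1]}\|_E=0$ forces $x=0$. Hence $E$ satisfies the hypotheses of Corollary~\ref{cor1}, where the probability space in the representation can be taken to be $([0,1],\mathrm{Leb})$ itself, so that $E_A=\{x\in E:\supp x\subseteq A\}$ for measurable $A\subseteq[0,1]$. Choosing any $T\in\dns(E,Y)$ (one exists by assumption), Corollary~\ref{cor1} yields $r>0$ such that $T_{E_A}$ is an isomorphism onto a closed subspace of $Y$ whenever $0<\mu(A)<r$. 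Fixing $a\in(0,1)$ with $a<r$ and taking $A=[0,a]$, we obtain a closed subspace $T(E_{[0,a]})\subseteq Y$ isomorphic to $E_{[0,a]}$, so it remains to show $E_{[0,a]}\cong E$.

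For this I would introduce the compression $C_a\colon E\to E_{[0,a]}$, $(C_af)(t)=f(t/a)\chi_{[0,a]}(t)$, and the spreading $S_a\colon E_{[0,a]}\to E$, $(S_ag)(t)=g(at)$, and verify three routine points: (i) $C_a$ and $S_a$ are mutually inverse linear bijections between $E$ and $E_{[0,a]}$; (ii) comparing distribution functions, $d_{C_af}(\lambda)=a\,d_f(\lambda)$, hence $(C_af)^*(s)=f^*(s/a)\le f^*(s)$, so $\|C_af\|_E\le\|f\|_E$; (iii) similarly $d_{S_ag}(\lambda)=a^{-1}d_g(\lambda)$, hence $(S_ag)^*(s)=g^*(as)=(D_{1/a}g^*)(s)$, where $D_{1/a}$ is the dilation operator, so $\|S_ag\|_E=\|D_{1/a}g^*\|_E$. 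Since the dilation operators are bounded on every r.i.\ space on $[0,1]$ — being bounded on $L_1[0,1]$ and on $L_\infty[0,1]$, they are bounded on $E$ by the interpolation property of r.i.\ spaces (see \cite{LT-II:79}) — the map $S_a$ is bounded, and by (i) it is an isomorphism of $E_{[0,a]}$ onto $E$. Composing its inverse with $T_{E_{[0,a]}}$ then embeds $E$ isomorphically onto the closed subspace $T(E_{[0,a]})$ of $Y$, which proves the corollary.

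The whole argument is essentially bookkeeping with decreasing rearrangements once one grants the boundedness of the dilation operator on $E$; that standard fact is the only ingredient that is not entirely elementary, and it is the single place where I would appeal to an external reference (or insert a one-line interpolation argument between $L_1$ and $L_\infty$).
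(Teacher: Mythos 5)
Your argument is correct and follows essentially the same route as the paper: apply Corollary~\ref{cor1} to get that $T$ is an isomorphism on $E_{[0,a]}$ for small $a$, then identify $E_{[0,a]}$ with $E$ via the dilation/compression operators (your $C_a,S_a$ are exactly the paper's $D_a$ and $D_{1/a}$ restricted to $E_{[0,a]}$), whose boundedness on $E$ comes from boundedness on $L_1$ and $L_\infty$ plus Calder\'on's interpolation result for r.i.\ spaces. The only cosmetic difference is that you verify the norm estimate for the compression directly from decreasing rearrangements instead of invoking Calder\'on for both directions.
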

\begin{proof} 
Note that the characteristic function $\chi_{[0,1]}$ is a weak unit in $E$. Moreover, as in \cite[Section 2.b]{LT-II:79}, for $0<s<\infty$ we consider the linear map $D_s$ defined on the space of measurable functions on $[0,1]$ by 
\begin{displaymath}
(D_sf)(t)=\left\{\begin{array}{cc}
   f(t/s),  &t\leq\min\{1,s\}  \\
   0,  &s<t\leq 1\quad \textrm{(in case $s<1$).} 
\end{array}\right. 
\end{displaymath}

 Clearly  $D_s$ has norm one on $L_\infty[0,1]$ and norm $s$ on $L_1[0,1]$. Thus, a result of  Calder\'on (see  \cite[Theorem 2.a.10]{LT-II:79}) implies that $D_s$ is bounded on $E$ with norm $\leq \max\{1,s\}$. 
 
If $1<s<\infty$ and $r=s^{-1}$, then  $D_r$ is injective and $D_rD_sf= \chi_{[0,r]}f$ (see \cite[Section 2.b]{LT-II:79}); hence $D_r$ is an isomorphism of $E$ onto $E_{[0,r]}$. 
Moreover, if $T\in \dns(E,Y)$ then Corollary \ref{cor1} implies that  $T$ is an isomorphism on $E_{[0,r]}$ for $r$ small enough. 
\end{proof}

%%%%%%%%%%%%%%%%%%%%%%%%%%

\section{Push-outs and $\dns$ operators}

Suppose that $E$ is order continuous with a weak unit. We denote by $\jno:E\To L_1(\mu)$ the inclusion of $E$ into $L_1(\mu)$, which is a (continuous) operator.

Given an operator $T:E\To Y$, the \emph{push-out diagram} for the pair $\jno,T$ is  
$$\begin{CD}
E @>T>> Y\\
@V{\jno}VV @V{\overline{\jno}}VV\\
L_1(\mu) @>{\overline{T}}>> \PO 
\end{CD}$$
where $\Delta=\{(Tx,-\jno x) : x\in E\}$ is a subspace of $Y\oplus_1 L_1(\mu)$ with closure $\overline{\Delta}$,  $PO$ is the quotient $\left(Y\oplus_1 L_1(\mu)\right)/ \overline{\Delta}$, and the operators $\overline{\jno}$ and $\overline{T}$ are defined by  $\overline{\jno} y= (y,0)+ \overline{\Delta}$ and $\overline{T}f= (0,f)+ \overline{\Delta}$. See \cite[Section 1.3]{CG:97}. 

Note that $\overline{\jno}$ and $\overline{T}$ are continuous  because they are restrictions of the quotient map onto $\PO$, and the push-out diagram is commutative: $\overline{\jno}T= \overline{T} \jno$. 

\begin{Prop}\label{DSS-j} 
Let $E$ be a Banach lattice.
\begin{enumerate}
\item The inclusion $\jno:E\to L_1(\mu)$ is disjointly strictly singular ($\jno \in \dss$) if and only if for every $(x_n)\in \dsn(E)$, $\|\jno x_n\|_1 \to 0$ as $n\to\infty$. 
\item If $E$ is an r.i.\ function space on $(0,1)$  different from $L_1(0, 1)$ then $\jno:E\to L_1(0, 1)$ is always $\dss$. 
\end{enumerate}
\end{Prop}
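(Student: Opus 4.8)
My plan is to derive part~(1) straight from the definition of $\dss$, using only the lattice identity $\bigl|\sum_i a_ix_i\bigr|=\sum_i|a_i|\,|x_i|$ valid for pairwise disjoint vectors, and then to deduce part~(2) from part~(1) combined with the classical description of $L_1(0,1)$ among the r.i.\ spaces on $(0,1)$ in terms of the fundamental function.

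For part~(1), the nontrivial direction is ``$\Rightarrow$''. I would assume $\jno\in\dss$ and suppose for contradiction that some $(x_n)\in\dsn(E)$ has $\|\jno x_n\|_1=\|x_n\|_1\not\to 0$; after passing to a subsequence, $\|x_n\|_1\ge\delta>0$ for all $n$. Disjointness of the $x_n$ then gives, for every finitely supported scalar sequence $(a_i)$,
\[
\Bigl\|\jno\bigl(\textstyle\sum_i a_ix_i\bigr)\Bigr\|_1=\sum_i|a_i|\,\|x_i\|_1\ \ge\ \delta\sum_i|a_i|\ \ge\ \delta\,\Bigl\|\textstyle\sum_i a_ix_i\Bigr\|_E ,
\]
so $\jno_{[x_n]}$ is bounded below, i.e.\ an isomorphism on a subspace generated by a disjoint sequence of non-zero vectors, contradicting $\jno\in\dss$. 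Conversely, if $\|\jno x_n\|_1\to 0$ for every $(x_n)\in\dsn(E)$ but $\jno_{[x_n]}$ were an isomorphism for some $(x_n)\in\ds(E)$, then passing to $y_n:=x_n/\|x_n\|_E\in\dsn(E)$ one would have $\|\jno y_n\|_1\ge c\|y_n\|_E=c>0$, a contradiction. This settles part~(1).

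For part~(2), by part~(1) it is enough to show $\|x_n\|_1\to 0$ for every $(x_n)\in\dsn(E)$. Since the $x_n$ are disjoint and $\mu$ is a probability measure, $\sum_n\mu(\supp x_n)<\infty$, hence $t_n:=\mu(\supp x_n)\to 0$. I would then estimate, via H\"older's inequality for the duality between $E$ and its K\"othe dual $E'$ and using $\|x_n\|_E=1$,
\[
\|x_n\|_1=\int |x_n|\,\chi_{\supp x_n}\,d\mu\ \le\ \|x_n\|_E\,\|\chi_{\supp x_n}\|_{E'}\ =\ \phi_{E'}(t_n),
\]
where $\phi_{E'}$ is the fundamental function of $E'$, and invoke the classical identity $\phi_E(t)\,\phi_{E'}(t)=t$ to get $\|x_n\|_1\le t_n/\phi_E(t_n)$. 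The proof is finished once one knows that $E\ne L_1(0,1)$ forces $\phi_E(t)/t\to\infty$ as $t\to 0^+$; this is the one genuinely nontrivial input, and it is classical (see, e.g., \cite{LT-II:79}): $\phi_E(t)/t$ is nonincreasing, and if it stayed bounded, say $\phi_E(t)\le Ct$, then the layer-cake representation $f^*=\int_0^\infty\chi_{\{f^*>\lambda\}}\,d\lambda$ together with the Minkowski integral inequality in $E$ would give $\|f\|_E\le\int_0^\infty\phi_E(|\{f^*>\lambda\}|)\,d\lambda\le C\|f\|_1$ for all $f\in E$, so that $E=L_1(0,1)$ up to an equivalent norm; equivalently, $L_1(0,1)$ is the largest r.i.\ space on $(0,1)$ and is characterized by boundedness of $\phi_E(t)/t$. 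Hence $t_n/\phi_E(t_n)\to 0$, so $\|x_n\|_1\to 0$, and part~(1) yields $\jno\in\dss$.

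Everything except that last classical fact is routine; I expect it to be the only real obstacle, and in the concrete case of a Lorentz space $E=L_{W,1}(0,1)$ even it is immediate, since $1=\|x_n\|_{W,1}=\int_0^{t_n}x_n^*(t)W(t)\,dt\ge W(t_n)\,\|x_n\|_1$ and $W(t_n)\to\infty$, so $\|x_n\|_1\le 1/W(t_n)\to 0$ at once.
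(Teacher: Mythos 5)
Your part (1) is correct and is essentially the paper's own argument: if some $(x_n)\in\dsn(E)$ has $\|\jno x_n\|_1$ bounded below (after passing to a subsequence), then disjointness of the images in $L_1(\mu)$ gives $\|\jno(\sum_i a_ix_i)\|_1=\sum_i|a_i|\,\|\jno x_i\|_1\ge\delta\sum_i|a_i|\ge\delta\|\sum_i a_ix_i\|_E$, so $\jno$ is an isomorphism on a subspace generated by a disjoint sequence; the converse is immediate. For part (2) you genuinely depart from the paper, which simply quotes \cite[Corollary 4.4]{GHSS:00}: you reduce, via part (1), to showing $\|x_n\|_1\to 0$ for every $(x_n)\in\dsn(E)$, and get this from H\"older against the K\"othe dual, $\|x_n\|_1\le\phi_{E'}(\mu(\supp x_n))=\mu(\supp x_n)/\phi_E(\mu(\supp x_n))$, together with the classical fact that $\phi_E(t)/t\to\infty$ as $t\to 0^+$ unless $E=L_1(0,1)$. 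This is a valid and more self-contained route; what it buys is an explicit quantitative mechanism (equi-integrability of the unit ball of $E$ in $L_1$, i.e.\ $\phi_{E'}(0^+)=0$) and the instant treatment of concrete cases such as Lorentz spaces, while what it spends is the standard r.i.-space background ($E'$ is r.i., the identity $\phi_E(t)\phi_{E'}(t)=t$, quasi-concavity of $\phi_E$), which is precisely what the citation to \cite{GHSS:00} packages away. Two pieces of fine print you should spell out, though neither is an obstruction: the Minkowski-type step $\|f\|_E\le\int_0^\infty\phi_E\bigl(\mu\{f^*>\lambda\}\bigr)\,d\lambda$ requires either the Fatou property or an approximation of $f^*$ from below by simple functions combined with order continuity (one of the two is always available for r.i.\ spaces in the sense of \cite{LT-II:79}); and from $\|f\|_E\le C\|f\|_1$ for $f\in E$ you conclude $E=L_1(0,1)$ only after noting that $E$ is then $\|\cdot\|_1$-complete, hence closed in $L_1$, and contains the dense sublattice $L_\infty$ (or by extending the inequality to all of $L_1$ via a Fatou/truncation argument).
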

\begin{proof}  
(1) For the direct implication, suppose that $(x_n)\in \dsn(E)$ and $C=\inf_n \|\jno  x_n\|_1 >0$. Note that $(x_n)$ is an unconditional basic sequence and  

\begin{center}
$\left\|\jno\left(\sum_{i=1}^\infty a_ix_i \right)\right\|_1=  \left\|\sum_{i=1}^\infty a_i\jno x_i\right\|_1= \sum_{i=1}^\infty |a_i|\cdot\|\jno x_i\|_1$
\end{center}
because $(\jno x_i)$ is a disjoint sequence in $L_1(\mu)$. Therefore, 

\begin{center}
$\left\|\jno\left(\sum_{i=1}^\infty a_ix_i \right)\right\|_1\geq  C\sum_{i=1}^\infty |a_i|\geq  C\left\|\sum_{i=1}^\infty a_ix_i \right\|,$
\end{center}
and $\jno$ is an isomorphism on $[x_n]$. 

The converse implication is immediate. 

(2) is proved in 
\cite[Corollary 4.4]{GHSS:00}. 
\end{proof}

%\textcolor{red}{See Astashkin et al.\ for further examples.} 

The following result can be found in  \cite{FHKT:16}, Proposition 1.1 and post comment. 

\begin{Lemma}\label{FHKT} 
Let $E$ be an order continuous with a weak unit. 
\begin{enumerate}
\item For every closed subspace $M$ of $E$, the restriction of $\jno$ to $M$ is an isomorphism, or $M$ is not dispersed. 
\item For every sequence $(x_n)$ in $S_E$, $\inf\|x_n\|_{L_1}>0$ or there exists a subsequence $(x_{n_k})$ and a disjoint sequence $(z_k)$ in $E$ such that $\|x_{n_k}-z_k\|\to 0$ as $n\to \infty$.  
\end{enumerate}
\end{Lemma}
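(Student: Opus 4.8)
The plan is to reduce both statements to the disjointification machinery for order continuous Banach lattices. For part (2), the dichotomy is really a reformulation of the fact that a normalized sequence in $E$ either has $L_1$-norms bounded away from zero or, after passing to a subsequence, is almost disjoint. First I would observe that if $\inf_n\|x_n\|_{L_1}=0$, then along a subsequence $\|x_{n_k}\|_{L_1}\to 0$, which by Lemma \ref{unconv-e} (applied to the weak unit $e=\chi_\Omega$, identifying $L_1$-norm convergence of $(|x_{n_k}|\wedge e)$ with convergence in measure) gives $x_{n_k}\overset{un}{\To}0$ in $E$. A normalized un-null sequence in an order continuous Banach lattice has a subsequence that is a small perturbation of a disjoint sequence: this is the standard ``disjointification'' argument (successive approximation using $x\chi_{B_n}\to x$ in norm for $B_n\uparrow\Omega$, exactly as carried out in the proof of Theorem \ref{supp-dns}, (1)$\Rightarrow$(2)). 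Extracting such a subsequence $(x_{n_k})$ and the accompanying disjoint sequence $(z_k)$ with $\|x_{n_k}-z_k\|\to 0$ completes part (2).

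For part (1), let $M$ be a closed subspace of $E$ and suppose $\jno_M=\jno|_M$ is not an isomorphism; I must show $M$ is not dispersed, i.e.\ produce $(x_n)\in\dsn(E)$ with $\dist(x_n,M)\to 0$. Since $\jno|_M$ is not an isomorphism, there is a normalized sequence $(u_n)$ in $M$ with $\|\jno u_n\|_{L_1}=\|u_n\|_{L_1}\to 0$. Now apply part (2): either $\inf_n\|u_n\|_{L_1}>0$, which is impossible here, or there is a subsequence $(u_{n_k})$ and a disjoint sequence $(z_k)$ in $E$ with $\|u_{n_k}-z_k\|\to 0$. Passing to a further subsequence we may assume $\|z_k\|\to 1$ (since $\|u_{n_k}\|=1$), so after normalizing, $(z_k/\|z_k\|)\in\dsn(E)$, and $\dist(z_k/\|z_k\|,M)\le \|z_k/\|z_k\|-u_{n_k}\|\to 0$. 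This exhibits a normalized disjoint sequence converging in distance to $M$, so $M$ is not dispersed.

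The main obstacle, and the only genuinely technical point, is the disjointification step underlying part (2): turning a normalized un-null sequence into a subsequence that is norm-close to a disjoint sequence. One should be careful that the perturbation estimates can be made summable and that the resulting sequence is genuinely disjoint (not merely almost disjoint) — this requires the gliding-hump selection of indices $n_k$ together with truncations $u_{n_k}\chi_{B_{m_k}}$, using order continuity to control the truncation errors, just as in the argument for Theorem \ref{supp-dns}. Everything else is routine bookkeeping, and since the paper attributes the statement to \cite{FHKT:16}, it suffices to indicate this reduction rather than reproduce the selection argument in full detail.
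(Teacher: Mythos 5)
The paper itself gives no proof of this lemma (it is quoted from \cite{FHKT:16}, Proposition 1.1 and the comment after it), so you are supplying an argument where the paper supplies none; judged on its own terms, your overall reduction is sound. Deducing (1) from (2) is correct: $\jno|_M$ fails to be an isomorphism exactly when there is $(u_n)\subset S_E\cap M$ with $\|u_n\|_{L_1}\to 0$, and normalizing the almost-disjoint perturbation gives a sequence in $\dsn(E)$ whose distance to $M$ tends to $0$, so $M$ is not dispersed. Likewise the first step of (2) is fine: $\|x_{n_k}\|_{L_1}\to 0$ gives $x_{n_k}\overset{\mu}{\To}0$, hence $x_{n_k}\overset{un}{\To}0$ by Lemma \ref{unconv-e}.

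The gap is precisely in the step you single out as the technical core. The gliding-hump selection in the proof of Theorem \ref{supp-dns}, (1)$\Rightarrow$(2), is run on a sequence with $\mu(\supp x_n)\to 0$ (summable after extraction): it is the smallness of the supports that makes $A_n=\bigcup_{k\geq n}\supp x_k$ of small measure and $B_n=\Omega\setminus A_n$ increase to $\Omega$. A normalized un-null sequence need not have supports of small measure (every $x_n$ may have full support, in which case $B_n=\emptyset$ and that selection produces nothing); un-nullity is strictly weaker than vanishing measure of supports, since Corollary \ref{disj=un-null} goes only one way. So ``exactly as carried out in the proof of Theorem \ref{supp-dns}'' does not apply to your sequence. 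What is missing is a preliminary truncation in \emph{height}, not in support: since $x_{n}\overset{\mu}{\To}0$, choose $\epsilon_k\downarrow 0$ and indices $n_k$ with $\mu(\{|x_{n_k}|>\epsilon_k\})\leq 2^{-k}$, and set $y_k=x_{n_k}\chi_{\{|x_{n_k}|>\epsilon_k\}}$; then $\|x_{n_k}-y_k\|_E\leq \|\epsilon_k\chi_\Omega\|_E\leq 2\epsilon_k$, using $L_\infty(\mu)\subset E$ with $\|f\|_E\leq 2\|f\|_\infty$ from the representation (this error is controlled by the embedding, not by order continuity). Only after this step do the supports of $(y_k)$ have summable measure, and then the selection of Theorem \ref{supp-dns} applies verbatim to $(y_k)$ and yields the disjoint $(z_k)$ with $\|x_{n_k}-z_k\|\to 0$. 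With that insertion, your plan is complete.
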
 

By Proposition \ref{DSS-j}, both  alternatives in Lemma \ref{FHKT} become dichotomies if and only if  $\jno:E\to L_1(\mu)$ is $\dss$.

We consider the injective operator $D:E\to Y \oplus_1 L_1(\mu)$ defined by $Dx= (Tx,-\jno x)$. 

%\begin{Def} Let $E$ be an order continuous Banach lattice with a weak unit. We say that a closed subspace $Y$ of $E$ is \emph{strongly embedded} when the restriction $\jno_Y$ is an isomorphism. \end{Def}
%
%Part (1) of Lemma \ref{FHKT} says that  $\jno_Y$ is an isomorphism, or $Y$ is not dispersed.   

\begin{Prop}\label{Delta}
Suppose that $E$ is an order continuous Banach lattice with a weak unit, and let $T\in\Lc(E,Y)$. 
\begin{enumerate}
\item If $T\in\dns$, then $\Delta$ is a closed subspace of $Y\oplus_1 L_1(\mu)$ and $\overline{\jno}$ is injective. 
\item If $\jno:E\to L_1(\mu)$ is $\dss$ and $\Delta$ is closed in  $Y\oplus_1 L_1(\mu)$ then $T\in\dns$. 
\end{enumerate}
\end{Prop}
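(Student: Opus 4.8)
The plan is to prove the two implications of Proposition \ref{Delta} separately, exploiting the operator $D\colon E\to Y\oplus_1 L_1(\mu)$, $Dx=(Tx,-\jno x)$, whose range is exactly $\Delta$.

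\textbf{Part (1).} Suppose $T\in\dns$. I would argue $D$ is bounded below, which immediately gives that $\Delta=R(D)$ is closed. Assume not: pick $(x_n)\in S_E$ with $\|Dx_n\|=\|Tx_n\|+\|\jno x_n\|_1\to 0$. Since $\|\jno x_n\|_1\to 0$, Lemma \ref{FHKT}(2) lets us pass to a subsequence and find a disjoint sequence $(z_k)$ in $E$ with $\|x_{n_k}-z_k\|\to 0$; in particular $\|z_k\|\to 1$, so after normalizing we get a normalized disjoint sequence $(\tilde z_k)$ with $\|T\tilde z_k\|\to 0$, contradicting $T\in\dns$ via Theorem \ref{dns-char} (or \ref{dns-char-oc}). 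Hence $D$ is bounded below, $\Delta$ is closed, and $\overline{\Delta}=\Delta$. For injectivity of $\overline{\jno}$: if $\overline{\jno}y=0$ then $(y,0)\in\overline{\Delta}=\Delta$, so $(y,0)=(Tx,-\jno x)$ for some $x\in E$; the second coordinate forces $\jno x=0$, hence $x=0$ (as $\jno$ is injective — $E$ sits inside $L_1(\mu)$), hence $y=Tx=0$.

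\textbf{Part (2).} Suppose $\jno$ is $\dss$ and $\Delta$ is closed, i.e. $D$ is bounded below, say $\|Dx\|\ge c\|x\|$ for all $x\in E$ with $c>0$. To show $T\in\dns$ I use Theorem \ref{dns-char} or \ref{dns-char-oc}: take any normalized disjoint sequence $(x_n)$ in $E$. Since $\jno\in\dss$, Proposition \ref{DSS-j}(1) gives $\|\jno x_n\|_1\to 0$. Then $\|Tx_n\|\ge\|Dx_n\|-\|\jno x_n\|_1\ge c-\|\jno x_n\|_1\to c>0$, so $\liminf_n\|Tx_n\|\ge c>0$, and Theorem \ref{dns-char-oc} (equivalence of (1) and (3) — or Theorem \ref{dns-char}(3)) yields $T\in\dns$.

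\textbf{Main obstacle.} The delicate point is Part (1): deducing that $D$ is bounded below rather than merely injective. Injectivity of $D$ alone does not force $\Delta$ closed, so the $\dns$ hypothesis must be converted into a quantitative lower bound, and the natural bridge is the disjointification step Lemma \ref{FHKT}(2), which requires $\inf_n\|x_n\|_{L_1}=0$ to fail for the putative bad sequence — precisely what $\|Dx_n\|\to 0$ supplies. One should double-check that the normalization of the near-disjoint sequence does not spoil the contradiction (it does not, since $\|z_k\|\to 1$). Everything else is routine bookkeeping with the $\oplus_1$ norm.
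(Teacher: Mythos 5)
Your proposal is correct and follows essentially the same route as the paper: both parts reduce to whether the injective operator $D$ is bounded below, using Lemma \ref{FHKT}(2) together with Theorem \ref{dns-char} (resp.\ \ref{dns-char-oc}) for part (1) and Proposition \ref{DSS-j}(1) for part (2), with the paper phrasing both implications contrapositively while you argue by contradiction/directly. Your explicit handling of the normalization of the near-disjoint sequence $(z_k)$ and of the open-mapping step (closed range of the injective $D$ gives a lower bound $c$) only makes explicit what the paper leaves implicit.
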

\begin{proof}
(1) Suppose that $\Delta= R(D)$ is not closed. Then there is a sequence $(x_n)$ in $S_E$ such that $\|Dx_n\|= \|Tx_n\|+ \|x_n\|_1\to 0$ as $n\to\infty$. By part (2) in Lemma \ref{FHKT}, there is a subsequence $(x_{n_k})$ and a disjoint sequence $(z_k)$ in $E$ such that $\|x_{n_k}-z_k\|\to 0$. Hence $\|Tz_k\|\to 0$, and Theorem \ref{dns-char} implies that $T\notin\dns$. 

Also, $\overline{\jno} y=0$ implies $(y,0)\in \overline{\Delta}=\Delta$; thus 
$(y,0)=(Tx,-\jno x)$ for some $x\in E$. Since $\jno$ is injective,  $x=0$ and $y=Tx=0$. Hence $\overline{\jno}$ is injective. 
\medskip

(2) Suppose that $T\not\in\dns$. Then there exists $(x_n)\in\dsn(E)$ such that $\|Tx_n\|\to 0$. Since $\|\jno x_n\|_1\to 0$, $R(D)=\Delta$ is non-closed.
\end{proof}

When both $\jno$ and $\overline{\jno}$ are injective, we can see $\overline{T}$ as an extension of $T$. 

\begin{Thm}\label{th:ext}
Suppose that $E$ is an order continuous Banach lattice with a weak unit, and let $T\in\dns(E,Y)$. 
\begin{enumerate} 
\item $\overline{T} \in \dns(L_1(\mu), \PO)$; equivalently, 
$\overline{T}$ is tauberian. 
%\item $N(T)$ is a reflexive subspace of $E$. 
\item $T$ is a tauberian operator. 
\item $T^{**}$ is tauberian and $T^{co}$ is an (into) isomorphism. 
\end{enumerate}
\end{Thm}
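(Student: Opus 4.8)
The plan is to prove (1) directly from the sequential characterizations of Section~3, and then to deduce (2) and (3) from (1) by transporting the relevant properties of $\overline{T}$ to $T$ through the operator $D:E\To W$, $W:=Y\oplus_1 L_1(\mu)$, $Dx=(Tx,-\jno x)$, which by Proposition~\ref{Delta}(1) is injective with closed range $\Delta=R(D)$, hence an isomorphic embedding. For (1) I would argue by contradiction: if $\overline{T}\notin\dns(L_1(\mu),\PO)$, then, $L_1(\mu)$ being order continuous, Theorem~\ref{dns-char} (or Theorem~\ref{supp-dns}) yields a normalized disjoint sequence $(f_n)$ in $L_1(\mu)$ with $\|\overline{T}f_n\|_{\PO}\to0$. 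Since $\|\overline{T}f_n\|_{\PO}=\inf_{x\in E}(\|Tx\|+\|f_n-\jno x\|_1)$, we may pick $x_n\in E$ with $\|Tx_n\|\to0$ and $\|\jno x_n-f_n\|_1\to0$; then $\|x_n\|_E\ge\|\jno x_n\|_1\to1$, so $v_n:=x_n/\|x_n\|_E$ is defined for large $n$ and $\|Tv_n\|=\|Tx_n\|/\|x_n\|_E\to0$. On the other hand, $(f_n)$ being disjoint in a probability space gives $\mu(\supp f_n)\to0$, so $f_n\overset{\mu}{\To}0$, hence $\jno x_n\overset{\mu}{\To}0$, hence $v_n\overset{\mu}{\To}0$ (the $E$-norms stay bounded away from $0$); by Lemma~\ref{unconv-e}, $(v_n)$ is a normalized $un$-null sequence with $\|Tv_n\|\to0$, contradicting Theorem~\ref{dns-char-oc}(2). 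Thus $\overline{T}\in\dns(L_1(\mu),\PO)$, which for an operator with domain $L_1(\mu)$ is the same as $\overline{T}$ being tauberian (\cite{GM:97,GMM:20}).

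For (2) and (3), note first that, $D$ being an isomorphic embedding, so are $D^{**}$ and $D^{(4)}$; in particular all three are tauberian, that is, $D^{**-1}(W)=E$ and $D^{(4)-1}(W^{**})=E^{**}$, and the induced operator $D^{co}=(T^{co},-\jno^{co}):E^{**}/E\To (Y^{**}/Y)\oplus_1(L_1(\mu)^{**}/L_1(\mu))$ is again an isomorphic embedding (a general property of the operator induced on the quotients by an isomorphic embedding). Taking second and fourth conjugates of $\overline{\jno}T=\overline{T}\jno$ gives commuting squares. For (2): if $\xi\in E^{**}$ and $T^{**}\xi\in Y$, then $\overline{T}^{**}(\jno^{**}\xi)=\overline{\jno}^{**}(T^{**}\xi)\in\PO$, so $\jno^{**}\xi\in L_1(\mu)$ because $\overline{T}$ is tauberian; hence $D^{**}\xi=(T^{**}\xi,-\jno^{**}\xi)\in W$, and $D^{**-1}(W)=E$ forces $\xi\in E$. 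So $T$ is tauberian and $T^{co}$ is injective. Running the same argument one level up — legitimate since $\overline{T}^{**}$ has domain the $L_1$-space $L_1(\mu)^{**}$ and is again $\dns$, hence tauberian (\cite{GM:97,GM:10}) — shows $T^{**}$ is tauberian. Finally, $D^{co}$ being an embedding gives $\|T^{co}\eta\|+\|\jno^{co}\eta\|\ge c\|\eta\|$ for all $\eta\in E^{**}/E$, while $\overline{T}^{co}$ is an (into) isomorphism (\cite{GM:97,GM:10}), say $\|\overline{T}^{co}\zeta\|\ge d\|\zeta\|$; then $\overline{\jno}^{co}T^{co}=\overline{T}^{co}\jno^{co}$ yields $\|\jno^{co}\eta\|\le d^{-1}\|\overline{\jno}^{co}\|\,\|T^{co}\eta\|$, and combining with the previous inequality gives $\|\eta\|\le C\|T^{co}\eta\|$. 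Hence $T^{co}$ is an (into) isomorphism.

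The point needing care is the proof of (1): the vectors $x_n\in E$ approximating the $L_1$-normalized disjoint sequence $(f_n)$ need not be bounded in $E$, so one cannot use them directly; the remedy is to normalize them in $E$ — possible because $\|x_n\|_E\ge\|\jno x_n\|_1\to1$ — and to use that convergence in measure, hence $un$-convergence via Lemma~\ref{unconv-e}, is unaffected by this normalization, which is exactly what allows the $un$-null characterization of $\dns$ (Theorem~\ref{dns-char-oc}(2)) to be applied. After that, parts (2) and (3) are formal diagram chasing, using only that $D$ is an isomorphic embedding and that the corresponding statements for $\overline{T}$ over $L_1(\mu)$ come from the theory of tauberian operators on $L_1$-spaces.
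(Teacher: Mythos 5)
Your proposal is correct, but it takes a genuinely different route from the paper's, and the comparison is worth recording. For (1) the paper works directly from the push-out norm formula $\|\overline{T}f_n\|=\inf_{x\in E}\left(\|Tx\|+\|f_n-\jno x\|_1\right)$ and concludes $\liminf_n\|\overline{T}f_n\|\ge 1$ from $\liminf_n\|f_n-\jno x\|_1\ge 1$ for each \emph{fixed} $x$; your contradiction argument (choose near-optimal $x_n$, renormalize in $E$ using $\|x_n\|_E\ge\|\jno x_n\|_1\to1$, pass to convergence in measure and apply Lemma~\ref{unconv-e} and Theorem~\ref{dns-char-oc}(2)) is actually the more careful one, since the paper's deduction interchanges the infimum over $x$ with the $\liminf$ in $n$: the bound $\ge1$ cannot hold in general (for $E=L_1(\mu)$, $\jno=\mathrm{id}$ and $\|T\|<1$, taking $x=f_n$ gives $\|\overline{T}f_n\|\le\|T\|<1$), and positivity of the $\liminf$ genuinely requires the hypothesis $T\in\dns$, exactly as you use it. For (2) and (3) the paper argues via perturbations: $T+S\in\dns$ for every compact $S$, so $N(\overline{T+S})=\jno\left(N(T+S)\right)$ is a reflexive (hence superreflexive) subspace of $L_1(\mu)$; then $T$ is tauberian by the perturbative characterization in \cite{GO:90}, in fact supertauberian by \cite{GM:95}, and $T^{co}$ an (into) isomorphism and $T^{**}$ tauberian follow from \cite{GM:10}. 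You instead transfer the known $L_1$-facts about $\overline{T}$ (namely $\overline{T}^{**}$ tauberian and $\overline{T}^{co}$ an into isomorphism, from \cite{GM:97,GM:10}) back to $T$ by chasing the push-out square and its biduals and residuum operators through the embedding $D=(T,-\jno)$, whose injectivity and closed range come from Proposition~\ref{Delta}(1). This works; the one assertion you leave as ``a general property'' --- that $D^{co}$ is an into isomorphism whenever $D$ is --- is true but deserves a line: for $w\in W$ one has $\dist\left(w,R(D)^{\perp\perp}\right)=\dist\left(w,R(D)\right)$, which yields $\dist\left(D^{**}\xi,W\right)\ge\frac{c}{2}\,\dist(\xi,E)$. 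The trade-off: your route avoids the supertauberian machinery and the perturbative characterizations, at the price of relying on the cited $L_1(\mu)$ theorems for both the bidual and the residuum of $\overline{T}$, whereas the paper's route gives as a by-product the strictly stronger conclusion that $T$ is supertauberian.
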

\begin{proof}
(1) Let $(f_n)$ be a disjoint sequence in $S_{L_1(\mu)}$. Then $\overline{T}f_n =(0,f_n)+\Delta$ and 
$$
\|\overline{T}f_n\|= \inf_{x\in E} \|Tx\|+\|f_n-\jno x\|_{L_1}. 
$$
Since $\liminf_{n\to\infty}\|f_n- \jno x \|_{L_1}\geq 1$ for each $x\in E$, we get $\liminf_{n\to\infty} \|\overline{T} f_n\|_{L_1}\geq 1$, hence $\overline{T} \in \dns$ by Theorem \ref{dns-char}. 
\medskip

(2) Note that $\overline{T} f=0$ iff $(0,f)\in\Delta$ iff $f=\jno x$ for some $x\in E$ and $Tx=0$. Hence $N(\overline{T})= \jno\left(N(T)\right)$ and $\jno$ is an isomorphism on $N(T)$. Since $\overline{T}$ is tauberian, $N(\overline{T})$ is reflexive, hence so is $N(T)$. 

Now, if $T\in \dns$ and $S\in\Lc(E,F)$ is compact, then $T+S\in\dns$ \cite[Corollary]{GMM:20}. Therefore  $N(T+S)$ is reflexive for each compact $S$, hence $T$ is tauberian by the main result of \cite{GO:90}. 
\medskip

(3) The argument we gave in the proof of (2) shows that each $T\in \dns(E,Y)$ is supertauberian in the sense of \cite{GM:95}, because each reflexive subspace of $L_1(\mu)$ is superreflexive and supertauberian operators admit a perturbative characterization: $T\in\Lc(X,Y)$ is supertauberian if and only if $N(T+K)$ is superreflexive for each compact operator $K\in\Lc(X,Y)$ \cite[Theorem 15]{GM:95}. Moreover, if $T$ is supertauberian then  $T^{co}$ is an (into) isomorphism \cite[Proposition 6.5.3]{GM:10}, and the last fact implies $T^{**}$ tauberian because $(T^{co})^{**}\equiv (T^{**})^{co}$ is injective in this case. 
\end{proof}

\begin{Quest}\label{Q1} 
Suppose that $E$ is an order continuous Banach lattice with a weak unit. 

Is it true that $\overline{T} \in \dns(L_1(\mu), \PO)$ implies  $T\in\dns(E,Y)$? 
\end{Quest}

From Theorem \ref{th:ext} we derive the following result. We observe that, for $E$, containing no copies of $c_0$ is slightly stronger than being order continuous \cite[Chapter 7]{Wnuk:99}. 

\begin{Prop}
Suppose that $E$ is a Banach lattice with a weak unit that contains no copies of $c_0$, and let $T\in\dns(E,Y)$. Then $T^{**}\in\dns(E^{**},Y^{**})$. 
\end{Prop}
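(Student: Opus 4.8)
The plan is to verify part~(3) of Theorem~\ref{dns-char} for $T^{**}$: one must show $\liminf_n\|T^{**}\xi_n\|>0$ for every normalized disjoint sequence $(\xi_n)$ in the bidual Banach lattice $E^{**}$. Since a Banach lattice with no copy of $c_0$ is order continuous, $E$ is order continuous with a weak unit, so Theorem~\ref{th:ext} applies; the only consequence of it I shall use is that $T^{co}\colon E^{**}/E\to Y^{**}/Y$ is an (into) isomorphism, say $\|T^{co}\eta\|\ge c'\|\eta\|$ for all $\eta$ and some $c'>0$. Rewritten with distances, this reads
\[
\dist(T^{**}\xi,Y)\ \ge\ c'\,\dist(\xi,E)\qquad\text{for every }\xi\in E^{**}.
\]

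The structural ingredient is that a Banach lattice with no copy of $c_0$ is a KB-space \cite[Chapter~7]{Wnuk:99}, so (the canonical image of) $E$ is a band in $E^{**}$; as $E^{**}$ is the dual of the Banach lattice $E^*$ it is Dedekind complete, hence $E$ is a \emph{projection} band in $E^{**}$, say with band projection $P\colon E^{**}\to E$. Given a normalized disjoint sequence $(\xi_n)$ in $E^{**}$, put $e_n=P\xi_n\in E$ and $z_n=(I-P)\xi_n$. Since $P$ and $I-P$ are contractive band projections, $|e_n|\le|\xi_n|$ and $|z_n|\le|\xi_n|$, so $(e_n)$ and $(z_n)$ are disjoint sequences with $\|e_n\|,\|z_n\|\le1$; and since $I-P$ annihilates $E$ and is contractive, $\|z_n\|=\dist(\xi_n,E)$.

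I would then argue by contradiction. Suppose $\|T^{**}\xi_n\|\to0$ along some subsequence, which we relabel; passing to a further subsequence we may assume either $\inf_n\|z_n\|\ge c_0>0$ or $\|z_n\|\to0$. In the first case,
\[
\|T^{**}\xi_n\|\ \ge\ \dist(T^{**}\xi_n,Y)\ =\ \|T^{co}(\xi_n+E)\|\ \ge\ c'\,\dist(\xi_n,E)\ =\ c'\,\|z_n\|\ \ge\ c'c_0>0,
\]
contradicting $\|T^{**}\xi_n\|\to0$. In the second case $\|e_n\|\to1$, so $(e_n/\|e_n\|)$ is eventually a normalized disjoint sequence in $E$; since $T\in\dns(E,Y)$, Theorem~\ref{dns-char}(3) (equivalently Theorem~\ref{dns-char-oc}) gives $\liminf_n\|T(e_n/\|e_n\|)\|>0$, hence $\|Te_n\|$ is eventually bounded below, and then
\[
\|T^{**}\xi_n\|\ \ge\ \|T^{**}e_n\|-\|T^{**}z_n\|\ =\ \|Te_n\|-\|T\|\,\|z_n\|
\]
is eventually bounded below, again a contradiction. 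Hence $\liminf_n\|T^{**}\xi_n\|>0$, so $T^{**}\in\dns(E^{**},Y^{**})$.

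The point needing care is the band decomposition. Mere order continuity makes $E$ only an ideal in $E^{**}$; the hypothesis ``no copy of $c_0$'' is used precisely to promote this to $E$ being a band, hence a projection band, in $E^{**}$, and then one must check that the band projection splits a disjoint sequence into disjoint pieces with $\|z_n\|=\dist(\xi_n,E)$. Granted this, the argument is a clean dichotomy: the ``singular'' part $z_n$ of each $\xi_n$ is controlled by the into-isomorphism $T^{co}$ from Theorem~\ref{th:ext}, while the ``regular'' part $e_n$ is controlled by $T$ being $\dns$ on $E$ itself.
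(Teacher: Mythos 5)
Your proof is correct and takes essentially the same route as the paper: since $E$ has a weak unit and no copy of $c_0$, it is a KB-space, so $E$ is a projection band in $E^{**}$; you split a normalized disjoint sequence by the band projection, control the component outside $E$ with the into-isomorphism $T^{co}$ from Theorem \ref{th:ext}(3), and control the component in $E$ using $T\in\dns$ via Theorem \ref{dns-char}(3). The only difference from the paper's argument is presentational (your dichotomy by contradiction versus the paper's single estimate $\|T^{**}z_n\|\geq C\max\{\|Tx_n\|,\|qT^{**}y_n\|\}$), so there is nothing to add.
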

\begin{proof}
Since $E$ contains no copies of $c_0$, the canonical copy of $E$ in $E^{**}$ is a projection band \cite[Theorem 1.c.4]{LT-II:79}. Thus, denoting $E^\perp =\{z\in E^{**} : |x|\wedge|z|=0\;\; \textrm{for each $x\in E$}\}$, we have that  $E^{**}=E\oplus E^\perp$. Let $P$ denote the projection on $E^{**}$ onto $E$ with kernel $E^\perp$, and let $q:Y^{**}\to Y^{**}/Y$ denote the quotient map. 

By part (3) in Theorem \ref{th:ext}, $T^{co}$ is an isomorphism (into); hence so is $qT^{**}$ on $E^\perp$. 
Therefore, given a normalized disjoint sequence $(z_n)$ in $E^{**}$ and denoting $x_n=Pz_n$ and $y_n=(I-P)z_n$, the sequence $(x_n)$ is disjoint in $E$ and there exists $C>0$ such that $\|T^{**} z_n\|\geq C\max\{\|Tx_n\|, \|qT^{**}y_n\}$ for each $n$. Hence $\liminf \|T^{**} x_n\|>0$, and we conclude $T^{**}\in\dns$. 
\end{proof}

%A positive answer to this question would provide a characterization of $\dns$ operators independent of the order structure of $E$. 

%%%%%%%%%%%%%%%%%%%%%%%%%%

\section{Ultraproducts of operators}

Here we prove the stability of the class of $\dns$ operators under ultrapowers when $E$ is order continuous with a weak unit. The following local variation of the notion of $\dns$ operator will be useful. 

\begin{Def}
Let $n\in\N$ and $r>0$. An operator $T\in \Lc(E,Y)$ is in the class $\dns_{n,r}$ if for each normalized disjoint $(x_i)_{i=1}^n$ in $E$ we have  $\max_{1\leq i\leq n}\|Tx_i\|\geq r$.
\end{Def}

Next result was proved in \cite{JNST:15} for operators acting on a $L_1$ space using Kakutani's representation theorem. Our proof here uses the properties of un-convergence.  

\begin{Prop}\label{dnslocal-prod}
Suppose that $E$ is order continuous with a weak unit. An operator $T\in \Lc(E,Y)$ is in $\dns$ if and only if $T\in \dns_{n,r}$ for some $n\in\N$ and $r>0$. 
\end{Prop}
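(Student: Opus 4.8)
The plan is to prove the two directions of this equivalence separately. The implication ``$T\in\dns_{n,r}$ for some $n,r$ $\Rightarrow$ $T\in\dns$'' is essentially formal, whereas the converse is where the representation of $E$ on a probability space is used.

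For the first implication I would argue by contraposition. Suppose $T\notin\dns$. By part~(3) of Theorem~\ref{dns-char} there is a sequence $(x_k)\in\dsn(E)$ with $\liminf_k\|Tx_k\|=0$; passing to a subsequence we may assume $\|Tx_k\|\to 0$, so there is $k_0$ with $\|Tx_k\|<r$ for all $k\ge k_0$. Then $(x_{k_0},x_{k_0+1},\dots,x_{k_0+n-1})$ is a normalized disjoint $n$-tuple with $\max_{1\le i\le n}\|Tx_{k_0+i-1}\|<r$, so $T\notin\dns_{n,r}$. As $n$ and $r$ were arbitrary, no pair $(n,r)$ works, which is the contrapositive; note this half uses neither order continuity nor the weak unit.

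For the converse, the key observation is that in any normalized disjoint $n$-tuple $(x_1,\dots,x_n)$ the supports are pairwise disjoint, so $\sum_{i=1}^n\mu(\supp x_i)\le\mu(\Omega)=1$, and hence $\mu(\supp x_{i_0})\le 1/n$ for some index $i_0$. Combining this with part~(4) of Theorem~\ref{supp-dns}: since $T\in\dns$, fix $r>0$ with the property that $\|Tx\|>r$ whenever $x\in S_E$ and $\mu(\supp x)<r$, and choose any $n\in\N$ with $1/n<r$. Then for an arbitrary normalized disjoint $(x_1,\dots,x_n)$ the coordinate $x_{i_0}$ above satisfies $\mu(\supp x_{i_0})\le 1/n<r$, so $\|Tx_{i_0}\|>r$ and therefore $\max_{1\le i\le n}\|Tx_i\|\ge r$; that is, $T\in\dns_{n,r}$. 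If one wishes to bypass Theorem~\ref{supp-dns}, an equivalent route is by contradiction: if $\dns_{k,1/k}$ failed for every $k$, one would obtain normalized disjoint families from which (by the same support count) a vector $w_k$ with $\mu(\supp w_k)\le 1/k$ and $\|Tw_k\|\to 0$ can be extracted; then $w_k\overset{un}{\To}0$ by Corollary~\ref{disj=un-null}, contradicting $T\in\dns$ via Theorem~\ref{dns-char-oc}.

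The only genuine point to watch, I expect, is this passage from a hypothesis about $n$-tuples, with $n$ ranging over $\N$, to a statement about a single vector of controlled support: it is precisely the normalization $\mu(\Omega)=1$ that makes ``some coordinate has small support'' automatic, after which the already-established characterizations of $\dns$ close the argument. Everything else is bookkeeping; one should merely record in passing the harmless monotonicities $\dns_{n,r}\subseteq\dns_{n+1,r}$ and $\dns_{n,r}\subseteq\dns_{n,r'}$ for $r'\le r$, so that the existential ``for some $n$ and $r$'' is unambiguous.
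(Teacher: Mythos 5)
Your proposal is correct and follows essentially the same route as the paper: the forward direction is the same appeal to Theorem~\ref{dns-char}(3), and the converse rests on exactly the paper's key observation that, since $\mu$ is a probability, among $n$ disjoint normalized vectors some coordinate has support of measure at most $1/n$. The only cosmetic difference is that your primary argument invokes Theorem~\ref{supp-dns}(4) directly (giving explicit $n,r$ without contradiction), while the paper argues by contradiction via Corollary~\ref{disj=un-null} and Theorem~\ref{dns-char-oc} --- which is precisely the alternative route you sketch at the end.
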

\begin{proof}
If $T\in \dns_{n,r}$, then for every $(x_n)\in \dsn(E)$, $\liminf_{n\rightarrow \infty} \|Tx_n\|\geq r$. Thus, by Theorem \ref{dns-char}, $T\in\dns$. 

Conversely, suppose that  $T\in \dns_{n,r}$ for no pair $n\in\N$ and $r>0$. Then for each $n\in\N$ we can find a normalized disjoint $(x^n_i)_{i=1}^n$ with $\max_{1\leq i\leq n}\|Tx^n_i\|\leq 1/n$. 

For each $n\in\N$ we select $i(n)\in \{1,\ldots,n\}$ so that $\mu\left(\supp x^n_{i(n)}\right)\leq 1/n$, and we denote $y_n=x^n_{i(n)}$. By Corollary  \ref{disj=un-null}, $y_n \overset{un}{\To} 0$. Since $\|Ty_n\|\to 0$, Theorem \ref{dns-char-oc} implies that $T\notin\dns$. 
\end{proof}

Next we state a characterization of the class $\dns_{n,r}$ that was given in the proof of \cite[Lemma 2.2]{JNST:15} for $E=Y$ a $L_1$ space. Note that for $0\neq x \in E\subset L_1(\mu)$, we have $x/|x|\in L_\infty(\mu)$.

\begin{Prop}\label{dnslocal-char}
Suppose that $E$ is order continuous with a weak unit. Then  $T\in \Lc(E,Y)$ is in  $\dns_{n,r}$ if and only if for every $\e>0$ there is $\dd>0$ such that if $x_1,\ldots,x_n\in S_E$ and $\||x_i|\wedge |x_j|\|<\dd$ for $1\leq i<j \leq n$ then $\max_{1\leq i\leq n}\|Tx_i\|> r-\e$.
\end{Prop}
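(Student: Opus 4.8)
The statement to prove is Proposition~\ref{dnslocal-char}, characterizing membership in $\dns_{n,r}$ via an approximate-disjointness condition.

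\medskip

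\textbf{Plan.} The plan is to prove both implications by contrapositive, exploiting the equivalence between control of $\||x_i|\wedge|x_j|\|$ and genuine disjointness via the order continuity of $E$ and Lemma~\ref{unconv-e} (un-convergence $=$ convergence of $(|x|\wedge e)$ in norm $=$ convergence in measure). For the easy direction, assume the $\e$--$\dd$ condition holds and let $(x_i)_{i=1}^n$ be \emph{genuinely} disjoint and normalized; then $\||x_i|\wedge|x_j|\|=0<\dd$ for every $\dd>0$, so $\max_i\|Tx_i\|>r-\e$ for all $\e>0$, whence $\max_i\|Tx_i\|\ge r$ and $T\in\dns_{n,r}$. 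The substantive direction is the converse: assume $T\in\dns_{n,r}$ but the $\e$--$\dd$ condition fails for some fixed $\e_0>0$. Then for each $m\in\N$ there are $x_1^{(m)},\dots,x_n^{(m)}\in S_E$ with $\||x_i^{(m)}|\wedge|x_j^{(m)}|\|<1/m$ for all $i<j$, yet $\max_i\|Tx_i^{(m)}\|\le r-\e_0$. The goal is to perturb, for each $m$, the $n$-tuple into a \emph{truly disjoint} normalized $n$-tuple on which $T$ is still controlled by roughly $r-\e_0$, contradicting $T\in\dns_{n,r}$ once $m$ is large.

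\medskip

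\textbf{The disjointification step.} The main obstacle is carrying out a \emph{finite} (uniform in $m$, size-$n$) disjointification: given $x_1,\dots,x_n\in S_E$ that are pairwise $1/m$-almost disjoint in the $|{\cdot}|\wedge|{\cdot}|$ sense, produce genuinely disjoint $z_1,\dots,z_n$ with $\|x_i-z_i\|$ small. The idea is: since $E\subset L_1(\mu)$ with $\|f\|_1\le\|f\|_E$, small $\||x_i|\wedge|x_j|\|_E$ forces $\||x_i|\wedge|x_j|\|_1$ small; but to pass from $L_1$-smallness back to $E$-smallness of the truncations we use order continuity as in the proof of Theorem~\ref{supp-dns}. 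Concretely, set $x_i' = x_i\cdot\chi_{\{|x_i| > (n-1)\,|x_i'|_{\mathrm{rest}}\}}$—more cleanly, define the genuinely disjoint vectors by a ``winner-take-all'' partition of $\Omega$: on the set where $|x_i(t)|$ strictly exceeds $|x_j(t)|$ for all $j\ne i$ put $z_i$, and discard ties (a null issue after an arbitrarily small perturbation, or handled by a fixed tie-breaking order). Then $|x_i - x_i\chi_{A_i}| \le \sum_{j\ne i}|x_i|\wedge|x_j|$ pointwise, so $\|x_i - x_i\chi_{A_i}\|_E \le \sum_{j\ne i}\||x_i|\wedge|x_j|\|_E < (n-1)/m$. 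Hence $z_i := x_i\chi_{A_i}$ are disjoint, and $\|x_i - z_i\|_E < (n-1)/m$, so $\|z_i\|_E \to 1$ and $\|Tz_i - Tx_i\|\le\|T\|(n-1)/m$. Normalizing $z_i$ and taking $m$ large yields a normalized disjoint $n$-tuple with $\max_i\|T(z_i/\|z_i\|)\| < r - \e_0/2 < r$, contradicting $T\in\dns_{n,r}$.

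\medskip

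I expect the only delicate points to be: (i) justifying the pointwise inequality $|x_i - x_i\chi_{A_i}| \le \sum_{j\ne i}|x_i|\wedge|x_j|$ — on $\Omega\setminus A_i$ some other $|x_j(t)|\ge|x_i(t)|$, so $|x_i(t)| = (|x_i|\wedge|x_j|)(t) \le \sum_{j\ne i}(|x_i|\wedge|x_j|)(t)$, which is valid — and (ii) handling the tie set $\{t : |x_i(t)| = |x_j(t)|\ne 0 \text{ for some } i\ne j\}$, which one removes from all the $A_i$'s at the cost of at most an additional $\sum_{i<j}\||x_i|\wedge|x_j|\|_E$-sized error, again $O(1/m)$. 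Everything else is the same normalization-and-pass-to-the-limit bookkeeping already used in the proof of Theorem~\ref{supp-dns}, so no genuinely new machinery is needed beyond the representation $E\subset L_1(\mu)$ and order continuity.
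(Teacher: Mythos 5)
Your proof is correct and follows essentially the same route as the paper: the substantive direction is handled by exactly the same disjointification-with-small-error idea, the paper replacing $x_i$ by $z_i=\big(|x_i|-|x_i|\wedge(\vee_{j\neq i}|x_j|)\big)\frac{x_i}{|x_i|}$ (with $1-n\dd\leq\|z_i\|\leq 1$) where you restrict $x_i$ to the strict-dominance set $A_i$, both yielding the error bound $\sum_{j\neq i}\||x_i|\wedge|x_j|\|$ before applying the $\dns_{n,r}$ condition to the normalized disjoint tuple. Your contrapositive packaging and tie-breaking worry are harmless (ties cause no problem since your pointwise estimate already covers equality), so no genuine difference in substance.
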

\begin{proof}
For the direct implication, if $x_1, \ldots, x_n\in S_E$ satisfy $\||x_i|\wedge |x_j|\|<\dd$ for $i\neq j$  and we define 
$$
z_i= \big(|x_i|- (|x_i|\wedge (\vee_{j\neq i} |x_j|)\big) \frac{x_i}{|x_i|},
$$
then the vectors $z_i$ are disjoint and  $1-n\dd\leq \|z_i\|\leq 1$. Applying the $\dns_{n,r}$ condition to  $(z_i/\|z_i\|)_{i=1}^n$ we get  $\max_{1\leq i\leq n}\|Tx_i\|> r-\e$ if $\delta=\delta(\e,n,\|T\|)$ is small enough.

The converse implication is immediate. 
\end{proof}

As a consequence, $\dns_{n,r}$ is stable under ultraproducts: 

\begin{Prop}\label{ultra-dns} Suppose that $E_i$ is order continuous with a weak unit for each $i\in I$. Let $\U$ be a non-trivial ultrafilter on $I$. If $(T_i)_{i\in I}$ is a bounded family with $T_i\in \dns_{n,r}(E_i,Y_i)$ for each $i\in I$ then $(T_i)_\U\in \dns_{n,r}$.
\end{Prop}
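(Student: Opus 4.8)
The statement is essentially a routine transfer of the $\e$–$\dd$ characterization of $\dns_{n,r}$ from Proposition~\ref{dnslocal-char} to the ultraproduct, so I would prove it by verifying that characterization for $(T_i)_\U$. First I would record that $((E_i)_\U$ is order continuous with a weak unit: since each $E_i$ carries a representation $L_\infty(\mu_i)\subset E_i\subset L_1(\mu_i)$ with uniform constants, the ultraproduct inherits $L_\infty((\mu_i)_\U)\subset (E_i)_\U\subset L_1((\mu_i)_\U)$ with the same constants, so the hypotheses of Proposition~\ref{dnslocal-char} apply to $(T_i)_\U$ as well. Then I would fix $\e>0$ and produce a $\dd>0$ that works for $(T_i)_\U$: take the $\dd=\dd(\e/2,n,\sup_i\|T_i\|)$ coming from the common local constant for the family $(T_i)$ — here the uniform bound $\sup_i\|T_i\|<\infty$ from the hypothesis "$(T_i)_{i\in I}$ bounded" is what lets me choose one $\dd$ for all $i$.

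Next I would carry out the lifting argument. Suppose $\xi_1,\dots,\xi_n\in S_{(E_i)_\U}$ with $\||\xi_k|\wedge|\xi_\ell|\|<\dd$ for $k\neq\ell$. Choose representatives $\xi_k=[(x^k_i)_i]$ with $\|x^k_i\|_{E_i}\to 1$ along $\U$; normalizing on the (large) set where $x^k_i\neq 0$ I may assume $x^k_i\in S_{E_i}$ for all $i$ in a set $U\in\U$. The lattice operations in an ultraproduct are computed coordinatewise, so $|\xi_k|\wedge|\xi_\ell| = [(|x^k_i|\wedge|x^\ell_i|)_i]$ and hence $\lim_{i\to\U}\||x^k_i|\wedge|x^\ell_i|\|_{E_i} = \||\xi_k|\wedge|\xi_\ell|\| < \dd$. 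Therefore the set $U'\in\U$ of indices $i$ for which $\||x^k_i|\wedge|x^\ell_i|\|_{E_i}<\dd$ for all pairs $k\neq\ell$ is in $\U$. For each such $i$, the $\dns_{n,r}$ characterization for $T_i$ gives $\max_k\|T_ix^k_i\|_{Y_i} > r-\e/2$. Taking the limit along $\U$ yields $\max_k\|(T_i)_\U\xi_k\| = \max_k\lim_{i\to\U}\|T_ix^k_i\|_{Y_i}\geq r-\e/2 > r-\e$, which is the desired inequality. By Proposition~\ref{dnslocal-char}, $(T_i)_\U\in\dns_{n,r}$.

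**Main obstacle.** There is no deep obstacle here — the content is bookkeeping with ultrafilter limits — but the one point requiring genuine care is the interplay between the ultraproduct \emph{Banach lattice} structure and the representations. I need the lattice operations $\wedge$ in $(E_i)_\U$ to be the coordinatewise ones (true by the definition of the order recalled in the Preliminaries, hence $|[(x_i)]| = [(|x_i|)]$ and $|[(x_i)]|\wedge|[(y_i)]| = [(|x_i|\wedge|y_i|)]$), and I must make sure the uniform sandwich $L_\infty\subset E_i\subset L_1$ passes to the ultraproduct so that Proposition~\ref{dnslocal-char} is literally applicable to $(T_i)_\U$. A small wrinkle is the normalization step: the representatives $x^k_i$ need only satisfy $\|x^k_i\|\to 1$ along $\U$, so I normalize coordinatewise on the $\U$-large set where they are nonzero; this is harmless since the condition and the conclusion are continuous in the $x^k_i$ and are only being checked along $\U$. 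Once these points are set up, the proof is a one-paragraph chase.
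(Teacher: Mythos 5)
Your argument is, at its core, the same as the paper's: use the boundedness of $(T_i)$ to extract from Proposition~\ref{dnslocal-char} a single $\dd=\dd(\e,n,\sup_i\|T_i\|)$ valid for all $T_i$, lift an (almost) disjoint normalized $n$-tuple of $(E_i)_\U$ to coordinatewise almost disjoint normalized representatives, apply the characterization to each $T_i$ on a $\U$-large set of indices, and pass to the ultrafilter limit. That part is correct and is exactly the paper's proof.

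The one step you should repair is the claim that $(E_i)_\U$ is order continuous with a weak unit because ``the uniform sandwich $L_\infty(\mu_i)\subset E_i\subset L_1(\mu_i)$ passes to the ultraproduct,'' which you use to make Proposition~\ref{dnslocal-char} ``literally applicable'' to $(T_i)_\U$. This is false in general: neither order continuity nor the existence of a weak unit is preserved by ultraproducts. For instance, in $(L_1(0,1))_\U$ the element represented by the constant functions $\mathbf{1}$ is not a weak unit, since $\bigl[\bigl(i\,\chi_{(0,1/i)}\bigr)\bigr]$ is a norm-one element disjoint from it; more generally $(L_1)_\U$ is an abstract $L_1$-space over a huge, non-$\sigma$-finite measure and typically has no weak unit at all (this failure is precisely what makes the decomposition $E_\U=\widetilde{E}\oplus\widetilde{E}^d$ of Weis nontrivial). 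Fortunately the error is harmless to your argument: you only need the \emph{converse} implication of Proposition~\ref{dnslocal-char} for $(T_i)_\U$, and that direction is trivial and requires no hypotheses on the lattice --- a disjoint normalized $n$-tuple satisfies $\||\xi_k|\wedge|\xi_\ell|\|=0<\dd$ for every $\dd$, so the $\e$--$\dd$ condition immediately gives $\max_k\|(T_i)_\U\xi_k\|\geq r$. (Equivalently, do as the paper does and argue directly on disjoint tuples, choosing representatives with coordinatewise overlaps $<\dd$.) The hypothesis ``order continuous with a weak unit'' is only needed for each $E_i$, where the nontrivial direction of Proposition~\ref{dnslocal-char} is invoked; delete the inheritance claim and your proof is complete.
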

\begin{proof}
Two vectors $[(x_i)], [(z_i)]$ in  $(E_i)_\U$ are disjoint if and only if $\lim_{i\to\U} \||x_i|\wedge|y_i|\|=0$. In this case, for each $\dd>0$ we can choose the representatives $(x_i), (y_i)$ so that $\||x_i|\wedge|y_i|\|< \dd$ for every $i\in I$. Since $(T_i)_{i\in I}$ is bounded, for each $\e>0$ we can choose $\delta=\delta(\e,n)$ in Proposition \ref{dnslocal-char} which is valid for all $T_i$, and conclude that $(T_i)_\U\in \dns_{n,r}$.  
\end{proof}

%Maybe the following consequence is enough for us. 

\begin{Cor}
\label{dnslocal}
Suppose that $E$ is order continuous with a weak unit, and  let $\U$ be a non-trivial ultrafilter. If $T\in\dns(E,Y)$ then the ultrapower $T_\U\in \dns$. 
\end{Cor}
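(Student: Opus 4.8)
The plan is to route the argument through the local class $\dns_{n,r}$, exploiting that membership in $\dns$ over a well-behaved lattice \emph{produces} membership in some $\dns_{n,r}$, while $\dns_{n,r}$ is \emph{stable} under ultraproducts with no hypothesis on the ultrapower. Since $E$ is order continuous with a weak unit and $T\in\dns(E,Y)$, Proposition \ref{dnslocal-prod} supplies $n\in\N$ and $r>0$ with $T\in\dns_{n,r}(E,Y)$. Let $I$ be the index set carrying $\U$, and apply Proposition \ref{ultra-dns} to the constant families $E_i=E$, $Y_i=Y$, $T_i=T$ for $i\in I$: this yields $T_\U=(T_i)_\U\in\dns_{n,r}(E_\U,Y_\U)$.

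It remains to pass from $T_\U\in\dns_{n,r}$ back to $T_\U\in\dns$. Here one must \emph{not} invoke the full equivalence of Proposition \ref{dnslocal-prod}, whose converse direction relied on order continuity and a weak unit --- properties $E_\U$ need not enjoy --- but only its forward direction, which holds over an arbitrary Banach lattice $F$: if $S\in\dns_{n,r}(F,Z)$ and $(x_k)\in\dsn(F)$, then whenever $\liminf_k\|Sx_k\|<r$ one can pick $n$ indices $k$ with $\|Sx_k\|<r$, and these form a normalized disjoint $n$-tuple contradicting the $\dns_{n,r}$ condition; hence $\liminf_k\|Sx_k\|\ge r>0$, so $S\in\dns$ by Theorem \ref{dns-char}. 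Applying this with $F=E_\U$ --- a Banach lattice via the ultraproduct construction recalled in the preliminaries --- and $S=T_\U$ completes the proof.

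The single delicate point is the one just highlighted: the ultrapower $E_\U$ is in general neither order continuous nor equipped with a weak unit, so the entire measure-theoretic apparatus of the earlier sections (un-convergence, supports, the representation inside $L_1(\mu)$) is unavailable on $E_\U$, and the argument there must rely only on facts valid for arbitrary Banach lattices. With the proof organised as above there is no real obstruction: Proposition \ref{ultra-dns} has already absorbed the quantitative content (for a constant family the uniform constant $\dd(\e,n)$ of Proposition \ref{dnslocal-char} is automatic), and the implication $\dns_{n,r}\Rightarrow\dns$ is purely combinatorial.
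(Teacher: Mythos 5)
Your proof is correct and follows essentially the same route the paper intends: the corollary is presented there as an immediate consequence of Proposition \ref{dnslocal-prod} and Proposition \ref{ultra-dns} applied to the constant family. Your additional care in noting that only the forward implication $\dns_{n,r}\Rightarrow\dns$ (valid for arbitrary Banach lattices, via Theorem \ref{dns-char}) is used on $E_\U$ is exactly the right reading of the paper's argument, since $E_\U$ need not be order continuous or have a weak unit.
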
 

As a consequence of the following observation, we shall show that the class of $\dns$ operators is not stable under ultraproducts.

\begin{Rem}\label{dns-q} 
It follows from \cite[Proposition 2.12]{GMM:20} that a closed subspace $M$ of $E$ is dispersed if and only if the quotient map $q:E\to E/M$ is a $\dns$ operator. 
\end{Rem}

\begin{Example}
Let $\U$ be a non-trivial ultrafilter on $\N$. By \cite[Corollary]{LT-II:79}, for each $k\in\N$, there exists a subspace $M_k$ of $L_1\equiv L_1(0,1)$ isometric to $L_{1+1/k}$. 
Since reflexive subspaces of $L_1(0,1)$ are dispersed, if $q_k:L_1\to L_1/M_k$ is the quotient map, then $q_k\in\dns$ for each $k\in \N$ by Remark \ref{dns-q}. 
Let us see that $(q_k)_\U\notin \dns$. 

Since $(q_k)_\U$ acts on $(L_1)_\U$, which is a $L_1(\mu)$ space \cite[Theorem 3.3]{Heinrich}, it is enough to show that $\ker (q_k)_\U$ is not reflexive, and this is true because it is not isomorphic to a subspace of $L_q(\mu)$ for some $q>1$, by the main result of \cite{Rosenthal:73}. 
\end{Example} 

The previous example also shows that the class of dispersed subspaces is not stable under ultraproducts: each $\ker q_k$ is dispersed, but $\ker (q_k)_\U$ is not. However, we can prove the stability for a local variation of the notion of dispersed subspace. 

\begin{Def}\label{n,r-d} 
Suppose that $E$ is order continuous with a weak unit, and let $n\in\N$ and $r>0$. A closed subspace $M$ of $E$ is \emph{$n,r$-dispersed} if for each disjoint set $\{x_1, \ldots, x_n\}$ in $S_E$ there exists $i\in \{1, \ldots, n\}$ so that $\dist(X_i,M)\geq r$.   
\end{Def}

In the conditions of Definition \ref{n,r-d}, a closed subspace $M$ of $E$ is $n,r$-dispersed if and only if the quotient map onto $E/M$ is a $\dns_{n,r}$ operator. Therefore, by Proposition \ref{dnslocal-prod}, $M$ is dispersed if and only if it is $n,r$-dispersed for some $n$ and $r$. 

\begin{Prop} Suppose that $E_i$ is order continuous with a weak unit for each $i\in I$. Let $\U$ be a non-trivial ultrafilter on $I$. If for each $i\in I$, $M_i$ is a ${n,r}$-dispersed subspace then $(M_i)_\U$ is a $n,r$-dispersed subspace of $(E_i)_\U$.
\end{Prop}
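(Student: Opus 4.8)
The plan is to deduce this from Proposition \ref{ultra-dns} through the correspondence, noted just after Definition \ref{n,r-d}, between $n,r$-dispersed subspaces and $\dns_{n,r}$ quotient maps. First I would let $q_i\colon E_i\to E_i/M_i$ be the quotient map for each $i\in I$; these have norm at most one, so $(q_i)_{i\in I}$ is a bounded family, and since $M_i$ is $n,r$-dispersed the equivalence stated right after Definition \ref{n,r-d} gives $q_i\in\dns_{n,r}(E_i,E_i/M_i)$. Applying Proposition \ref{ultra-dns} to this family then yields at once $(q_i)_\U\in\dns_{n,r}\big((E_i)_\U,(E_i/M_i)_\U\big)$.

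The second step is to transport this statement back to $(M_i)_\U$. For that I would invoke the standard description of ultraproducts of quotients (see \cite{Heinrich}, and \cite[Chapter 8]{DJT:95}): $(M_i)_\U$ is a closed subspace of $(E_i)_\U$, the assignment $[(x_i+M_i)]\mapsto [(x_i)]+(M_i)_\U$ is an isometric isomorphism from $(E_i/M_i)_\U$ onto $(E_i)_\U/(M_i)_\U$, and under this identification $(q_i)_\U$ becomes the quotient map $Q\colon (E_i)_\U\to (E_i)_\U/(M_i)_\U$. Hence $Q\in\dns_{n,r}$. Since $\dist(u,(M_i)_\U)=\|Qu\|$ for every $u\in(E_i)_\U$, the membership $Q\in\dns_{n,r}$ says exactly that for each disjoint normalized $\{u_1,\dots,u_n\}$ in $(E_i)_\U$ there is $k$ with $\dist(u_k,(M_i)_\U)\ge r$; that is, $(M_i)_\U$ is $n,r$-dispersed.

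I do not expect a genuine obstacle here; the two things to verify are routine. One is that the isometric identification $(E_i/M_i)_\U\cong(E_i)_\U/(M_i)_\U$ really does intertwine $(q_i)_\U$ with the quotient map, which rests on the elementary fact that $\dist\big([(x_i)],(M_i)_\U\big)=\lim_{i\to\U}\dist(x_i,M_i)$; the only mild care needed is to choose bounded near-best approximants $m_i\in M_i$ before passing to the limit. The other is a purely formulational point: an ultrapower of an order continuous Banach lattice with a weak unit need not again be order continuous or possess a weak unit, so Definition \ref{n,r-d} does not literally apply to $(E_i)_\U$; but its defining inequality is meaningful for any Banach lattice, and it is precisely that inequality the argument above establishes, so the proposition is unambiguous and correct as read.
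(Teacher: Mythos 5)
Your proposal is correct and follows essentially the same route as the paper, which simply combines Proposition \ref{ultra-dns} with the observation after Definition \ref{n,r-d} that $M$ is $n,r$-dispersed exactly when the quotient map onto $E/M$ lies in $\dns_{n,r}$. Your extra steps (the isometric identification $(E_i/M_i)_\U\cong(E_i)_\U/(M_i)_\U$ intertwining $(q_i)_\U$ with the quotient map, and the remark that the defining inequality of Definition \ref{n,r-d} makes sense in any Banach lattice) are just the details the paper leaves implicit, and they are handled correctly.
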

\begin{proof}
It is a direct consequence of  Proposition \ref{ultra-dns} and Remark \ref{dns-q}. 
\end{proof}

Observe that $T\in \dns_{n,r}$ and $c>0$ implies $cT\in \dns_{n,cr}$; hence $N(T)$ is not necessarily $n,r$-dispersed, and there is no perturbative characterization for $T\in \dns_{n,r}$. 

%\section{\textcolor{red}{Questions deserving further study}}

%Question \ref{Q1} and Question \ref{Q2}. 

%%%%%%%%%%%%%%%%%%%%%%%%%%

\end{document}